\documentclass[12pt,draft]{amsart}
\usepackage{amssymb,mathabx,times}

\makeatletter
\def\@strippedMR{}
\def\@scanforMR#1#2#3\endscan{
  \ifx#1M\ifx#2R\def\@strippedMR{#3}
  \else\def\@strippedMR{#1#2#3}
  \fi\fi}
\renewcommand\MR[1]{\relax\ifhmode\unskip\spacefactor3000 \space\fi
  \@scanforMR#1\endscan
  MR\MRhref{\@strippedMR}{\@strippedMR}}
\makeatother

\addtolength{\textwidth}{+4cm}
\addtolength{\textheight}{+2cm}
\hoffset-2cm
\voffset-1cm
\hfuzz2pt
\vfuzz2pt

\parindent=0in
\parskip=\medskipamount

\newtheorem*{Thm*}{Theorem}
\newtheorem{Thm}{Theorem}
\newtheorem{Cor}[Thm]{Corollary}
\newtheorem{Prop}[Thm]{Proposition}
\newtheorem{Lemma}{Lemma}

\theoremstyle{definition}
\newtheorem{Defn}{Definition}
\newtheorem{Notation}[Defn]{Notation}

\newtheorem{Remark}{Remark}
\newtheorem{Example}{Example}

\newtheorem*{Question}{Question}

\newcommand{\mf}[1]{\mathbb{#1}}
\newcommand{\mc}[1]{\mathcal{#1}}
\newcommand{\mb}[1]{\mathbf{#1}}

\DeclareMathOperator{\Var}{\mathrm{Var}}

\newcommand{\abs}[1]{\left\vert#1\right\vert}

\newcommand{\set}[1]{\left\{#1\right\}}
\newcommand{\ip}[2]{\left \langle #1, #2 \right \rangle}
\newcommand{\Span}[1]{\mathrm{Span} \left( #1 \right)}
\newcommand{\eps}{\varepsilon}

\newcommand{\Dist}{\mathcal{D}_{alg}}

\allowdisplaybreaks[1]

\title{Free evolution on algebras with two states II}
\author{Michael Anshelevich}
\thanks{This work was supported in part by NSF grants DMS-0900935 and DMS-1160849.}
\address{Department of Mathematics, Texas A\&M University, College Station, TX 77843-3368}
\email{manshel@math.tamu.edu}
\subjclass[2010]{Primary 46L54}
\date{\today}

\begin{document}

\begin{abstract}
Denote by $\mc{J}$ the operator of coefficient stripping. We show that for any free convolution semigroup $\set{\mu_t :t \geq 0}$ with finite variance, applying a single stripping produces semicircular evolution with non-zero initial condition, $\mc{J}[\mu_t] = \rho \boxplus \sigma_{\beta, \gamma}^{\boxplus t}$, where $\sigma_{\beta, \gamma}$ is the semicircular distribution with mean $\beta$ and variance $\gamma$. For more general freely infinitely divisible distributions $\tau$, expressions of the form $\widetilde{\rho} \boxplus \tau^{\boxplus t}$ arise from stripping $\widetilde{\mu}_t$, where $\set{(\widetilde{\mu}_t, \mu_t) : t \geq 0}$ form a semigroup under the operation of two-state free convolution. The converse to this statement holds in the algebraic setting. Numerous examples illustrating these constructions are computed. Additional results include the formula for generators of such semigroups.
\end{abstract}


\maketitle

\section{Introduction}

A probability measure $\mu$ on $\mf{R}$ all of whose moments are finite can be described by two sequences of Jacobi parameters:
\[
J(\mu) =
\begin{pmatrix}
\beta_0, & \beta_1, & \beta_2, & \beta_3, & \ldots \\
\gamma_0, & \gamma_1, & \gamma_2, & \gamma_3, & \ldots
\end{pmatrix}.
\]
For example, its Cauchy transform
\[
G_\mu(z) = \int_{\mf{R}} \frac{1}{z - x} \,d\mu(x)
\]
(which determines the measure) has the continued fraction expansion
\[
G_\mu(z) =
\cfrac{1}{z - \beta_0 -
\cfrac{\gamma_0}{z - \beta_1 -
\cfrac{\gamma_1}{z - \beta_2 -
\cfrac{\gamma_2}{z - \beta_3 -
\cfrac{\gamma_3}{z - \ldots}}}}}
\]
Define new measures $\Phi[\mu]$ and $\mc{J}[\mu]$ by the right and left shifts on Jacobi parameters:
\[
J(\Phi[\mu]) =
\begin{pmatrix}
0, & \beta_0, & \beta_1, & \beta_2, & \ldots \\
1, & \gamma_0, & \gamma_1, & \gamma_2, & \ldots
\end{pmatrix}
\]
and
\[
J(\mc{J}[\mu]) =
\begin{pmatrix}
\beta_1, & \beta_2, & \beta_3, & \beta_4, & \ldots \\
\gamma_1, & \gamma_2, & \gamma_3, & \gamma_4, & \ldots
\end{pmatrix}.
\]
$\mc{J}$ is sometimes called coefficient stripping. Actually, both $\Phi$ and $\mc{J}$ can be defined more generally: $\Phi$ for any probability measure, and $\mc{J}$ for any probability measure with finite variance. See Definition~\ref{Defn:Phi-J}.

Denote
\[
d \sigma_{\beta, \gamma}(x) = \frac{1}{2 \pi \gamma} \sqrt{4\gamma - (x- \beta)^2} \,dx
\]
the semicircular distribution with mean $\beta$ and variance $\gamma$, $\sigma = \sigma_{0,1}$ the standard semicircular distribution, and $\boxplus$ the operation of free convolution. The semicircular family $\set{\sigma_{\beta t, \gamma t} = \sigma_{\beta, \gamma}^{\boxplus t} : t \geq 0}$ forms a free convolution semigroup. General free convolution semigroups
\[
\set{\mu_t : t \geq 0}
\]
with mean zero and variance $t$ are indexed by probability measures $\rho$. In Proposition~9 of \cite{Ans-Generator}, we showed that for any such free convolution semigroup,
\begin{equation*}
\mc{J}[\mu_t] = \rho \boxplus \sigma^{\boxplus t},
\end{equation*}
so that the ``once-stripped'' free convolution semigroup is always a ``free heat evolution'' started at $\rho$. Needless to say, this statement has no analog for semigroups with respect to usual convolution. In the first result of the paper, we extend this formula to the case of general finite variance: for a free convolution semigroup $\set{\mu_t}$ with mean $\beta t$ and non-zero variance $\gamma t$,
\begin{equation}
\label{Eq:Free-evolution-intro}
\mc{J}[\mu_t] = \rho \boxplus \sigma_{\beta, \gamma}^{\boxplus t}.
\end{equation}

Since any free convolution semigroup, when stripped, always gives a semicircular evolution, it is natural to ask for which families of measures $\set{\widetilde{\mu}_t : t \geq 0}$ is
\begin{equation}
\label{Eq:Two-state-evolution-short}
\mc{J}[\widetilde{\mu}_t] = \widetilde{\rho} \boxplus \tau^{\boxplus t}
\end{equation}
for other measures $\tau$. The main result of the article is that if this is the case, there exists a free convolution semigroup $\set{\mu_t : t \geq 0}$ such that the family of pairs of measures $\set{(\widetilde{\mu}_t, \mu_t): t \geq 0}$ forms a semigroup under the operation $\boxplus_c$ of \emph{two-state free convolution}. Note that formula~\eqref{Eq:Two-state-evolution-short} can sometimes be assigned a meaning even if $\tau$ is not freely infinitely divisible. For example, if $\widetilde{\rho} = \nu \boxplus \tau$ for some $\nu$, then for general probability measures $\tau, \nu$ there exists a family of measures forming the first component of the two-state free convolution semigroup such that
\[
\mc{J}[\widetilde{\mu}_t] = \nu \boxplus \tau^{\boxplus (1 + t)}
\]
(recall that in free probability, $\tau^{\boxplus (1 + t)}$ is well defined for any $\tau$ as long as $t \geq 0$). The most general case covered by the main theorem of the article (Theorem~\ref{Thm:Two-state-free-evolution}) is that for some semigroups,
\[
\mc{J}[\widetilde{\mu}_t] = \widetilde{\rho} \boxplus \omega^{\boxplus (t/p)},
\]
where $\tau = \omega^{\boxplus (1/p)}$ need not even be a positive measure, but where the \emph{subordination distribution} $\omega \boxright \widetilde{\rho}$ is freely infinitely divisible. It is unclear at this point whether every two-state free convolution semigroup (with finite variance) is of this form. Nevertheless, a large group of examples fit into this framework: free convolution semigroups, Boolean convolution semigroups, two-state free Brownian motions, and two-state free Meixner distributions. Moreover, in the last section of the paper we show that in the algebraic setting, when $(\widetilde{\mu}_t, \mu_t)$ are linear functionals on polynomials but do not necessarily come from positive measures, formula~\eqref{Eq:Two-state-evolution-short} does always hold for some (not necessarily positive) $\tau$. In that section we also prove a basic formula for the moment generating function of the multivariate subordination distribution (see below), which really belongs on the long list of properties of that distribution proven in \cite{Nica-Subordination}.

The other aspects of two-state free convolution semigroups are investigated at the end of Section~\ref{Section:Complex}. We compute the two-state version of Voiculescu's evolution equation for the Cauchy transform. Then we combine it with the preceding results to find the formula for the generators of two-state free convolution semigroups with finite variance.

Finally, we would like to explain the connection between this article and part I of the same title \cite{AnsEvolution}. In \cite{Belinschi-Nica-B_t,Belinschi-Nica-Free-BM}, Belinschi and Nica proved that the eponymous family of transformations $\set{\mf{B}_t: t \geq 0}$, is related to the free heat evolution via
\begin{equation}
\label{Eq:Phi-evolution}
\mf{B}_t[\Phi[\rho]] = \Phi[\rho \boxplus \sigma^{\boxplus t}].
\end{equation}
Equation~\eqref{Eq:Free-evolution-intro} follows from this observation after only a small amount of work. In part I, we constructed a two-variable map $\Phi[\cdot, \cdot]$ and proved that
\begin{equation}
\label{Eq:Phi-2-evolution}
\mf{B}_t[\Phi[\tau, \widetilde{\rho}]] = \Phi[\tau, \widetilde{\rho} \boxplus \tau^{\boxplus t}].
\end{equation}
Moreover, the transformation $\Phi[\cdot, \cdot]$ as defined in \cite{AnsEvolution} also comes from two-state free probability theory. In \cite{Nica-Subordination}, Nica observed that $\Phi[\tau, \widetilde{\rho}]$ is closely related to the subordination distribution $\tau \boxright \widetilde{\rho}$, which is a more important object in free probability, and so will be used in computations in this paper.

At this point the evolution formula~\eqref{Eq:Phi-2-evolution} is only proven for measures all of whose moments are finite, while we are interested in a more general class of measures with finite variance. Moreover, the derivation of \eqref{Eq:Free-evolution-intro} from \eqref{Eq:Phi-evolution} does not generalize to a derivation of \eqref{Eq:Two-state-evolution-short} from \eqref{Eq:Phi-2-evolution}; the proof of \eqref{Eq:Two-state-evolution-short} is quite different. Nevertheless, both this article and part I involve two-state free probability theory, and generalization of semicircular evolution to more general free convolution semigroups.

\textbf{Acknowledgments.} The author is grateful to Dan Voiculescu for asking the question which led to Proposition~\ref{Prop:PDE}, to Hari Bercovici, Serban Belinschi, and Wojtek M{\l}otkowski for discussions leading to Lemma~\ref{Lemma:Hari} and Example~\ref{Example:Counterexample}, and to the referee for numerous helpful comments.

\section{Background}
\label{Section:Preliminaries}

\begin{Notation}
Denote by $m[\mu]$ and $\Var[\mu]$ the mean and variance of $\mu$,
\[
\mc{P} = \set{\text{probability measures on } \mf{R}},
\]
\[
\mc{P}_2 = \set{\mu \in \mc{P} : \Var[\mu] < \infty},
\]
\[
\mc{P}_{0,1} = \set{\mu \in \mc{P}_2 : m[\mu] = 0, \Var[\mu] = 1},
\]
\[
\mc{ID}^{\boxplus} = \set{\mu \in \mc{P} : \mu \text{ is } \boxplus\text{-infinitely divisible}}.
\]
For a probability measure $\mu$ on $\mf{R}$, its Cauchy transform is
\[
G_\mu(z) = \int_{\mf{R}} \frac{1}{z - x} \,d\mu(x),
\]
and its $F$-transform is
\[
F_\mu(z) = \frac{1}{G_\mu(z)}
\]
(for a function $f$, $f^{-1}$ will denote its compositional rather than a multiplicative inverse).
\end{Notation}

\subsection{Convolutions}

For $\mu \in \mc{P}$, define its Voiculescu transform $\phi_\mu$ by
\[
(\phi_\mu \circ F_\mu)(z) + F_\mu(z) = z
\]
See \cite{BV93,VDN}. The free convolution of two measures $\mu \boxplus \nu$ is determined by the equality
\[
\phi_{\mu \boxplus \nu} = \phi_\mu + \phi_\nu
\]
on a domain. A free convolution semigroup is a weakly continuous family $\set{\mu_t: t \geq 0} \subset \mc{P}$ satisfying
\[
\mu_t \boxplus \mu_s = \mu_{t+s}.
\]
In this case we denote $\mu_t = \mu^{\boxplus t}$. A measure $\mu$ is $\boxplus$-infinitely divisible if $\mu = \mu_1$ for some free convolution semigroup. A fundamental result in \cite{Nica-Speicher-Multiplication}, extended to measures with unbounded support in \cite{Belinschi-Bercovici-Partially-defined}, is that for any $\mu \in \mc{P}$, $\mu^{\boxplus t}$ is defined for $t \geq 1$.

We will refer to the set
\[
\set{(\beta, \gamma, \rho) : \beta \in \mf{R}, \gamma > 0, \rho \in \mc{P}} \cup \set{(\beta, 0, \cdot) : \beta \in \mf{R}}
\]
as \emph{canonical triples}. By a result of Maassen \cite{Maa92}, $\boxplus$-convolution semigroups with finite variance
\[
\set{\mu_t : t \geq 0, \Var[\mu_1] < \infty}
\]
are in bijection with canonical triples, the bijection being given by
\begin{equation}
\label{Eq:Maassen-rep}
\phi_{\mu_t}(z) = \beta t + \gamma t G_\rho(z).
\end{equation}
Here $\beta = m[\mu_1]$ and $\gamma = \Var[\mu_1]$. $\boxplus$-convolution semigroups with zero variance are of the form $\mu_t = \delta_{\beta t}$, and so correspond to $(\beta, 0, \cdot)$ with $\gamma = 0$ and $\rho$ undefined.

Similarly, for $\widetilde{\mu}, \mu \in \mc{P}$, define the two-state Voiculescu transform $\phi_{\widetilde{\mu}, \mu}$ by
\begin{equation}
\label{Eq:Two-state-R-transform}
(\phi_{\widetilde{\mu}, \mu} \circ F_\mu)(z) + F_{\widetilde{\mu}}(z) = z.
\end{equation}
See \cite{Krystek-Conditional,Wang-Additive-c-free}. The two-state free convolution of two pairs of measures
\[
(\rho, \mu \boxplus \nu) = (\widetilde{\mu},\mu) \boxplus_c (\widetilde{\nu}, \nu)
\]
is determined by the equality
\[
\phi_{\rho, \mu \boxplus \nu} = \phi_{\widetilde{\mu},\mu} + \phi_{\widetilde{\nu}, \nu}
\]
on a domain. A two-state free convolution semigroup is a component-wise weakly continuous family $\set{(\widetilde{\mu}_t, \mu_t): t \geq 0}$ satisfying
\[
(\widetilde{\mu}_t, \mu_t) \boxplus_c (\widetilde{\mu}_s, \mu_s) = (\widetilde{\mu}_{t+s}, \mu_{t+s}).
\]
In this case we denote $(\widetilde{\mu}_t, \mu_t) = (\widetilde{\mu}, \mu)^{\boxplus_c t}$. The pair $(\widetilde{\mu}, \mu)$ is $\boxplus_c$-infinitely divisible if $(\widetilde{\mu}, \mu) = (\widetilde{\mu}_1, \mu_1)$ for some two-state free convolution semigroup.

For a fixed free convolution semigroup $\set{\mu_t : t \geq 0}$, the $\boxplus_c$-convolution semigroups $\set{(\widetilde{\mu}_t, \mu_t)}$ such that $\widetilde{\mu}_1$ has finite variance are in bijection with (relative) canonical triples $(\widetilde{\beta}, \widetilde{\gamma}, \widetilde{\rho})$, the bijection being given by
\begin{equation}
\label{Eq:Two-state-Maassen}
\phi_{\widetilde{\mu}_t, \mu_t}(z) = \widetilde{\beta} t + \widetilde{\gamma} t G_{\widetilde{\rho}}(z).
\end{equation}
Here $\widetilde{\beta} = m[\widetilde{\mu}_1]$ and $\widetilde{\gamma} = \Var[\widetilde{\mu}_1]$. This does not appear to be stated explicitly, but follows from the description of general two-state freely infinitely divisible distributions in Theorem~4.1 of \cite{Wang-Additive-c-free}. Again, the case $\Var[\widetilde{\mu}_1] = 0$ can be included by setting $\widetilde{\gamma} = 0$ and leaving $\widetilde{\rho}$ undefined.

The Boolean convolution $\mu \uplus \nu$ is defined by
\[
(\mu, \delta_0) \boxplus_c (\nu, \delta_0) = (\mu \uplus \nu, \delta_0).
\]
More explicitly, $\phi_{\mu, \delta_0}(z) = z - F_\mu(z)$, so
\[
z - F_{\mu \uplus \nu}(z) = (z - F_\mu(z)) + (z - F_\nu(z)).
\]
Any distribution is $\uplus$-infinitely divisible, so $\mu^{\uplus t}$ is always defined for any $t \geq 0$.

Finally, a few arguments in the article simplify with the use of the monotone convolution $\mu \rhd \nu$, defined by
\[
F_{\mu \rhd \nu} = F_\mu \rhd F_\nu.
\]

\begin{Defn}
\label{Defn:Phi-J}
For measures all of whose moments are finite, transformations $\Phi$ and $\mc{J}$ were defined in the introduction. Here are the more general definitions. $\Phi$ is the bijection
\[
\Phi : \mc{P} \rightarrow \mc{P}_{0,1}
\]
defined by
\[
F_{\Phi[\nu]}(z) = z - G_\nu(z),
\]
see \cite{Belinschi-Nica-B_t}. For $\mu \in \mc{P}_2$ with $m[\mu] = \beta$ and $\Var[\mu] = \gamma > 0$, define $\mc{J}[\mu]$ by
\[
F_\mu(z) = z - \beta - \gamma G_{\mc{J}[\mu]}(z).
\]
Then
\[
\mc{J} : \mc{P}_2 \rightarrow \mc{P},
\]
$\mc{J} \circ \Phi$ is the identity map, while $\Phi \circ \mc{J}$ is identity on $\mc{P}_{0,1}$.
\end{Defn}

\begin{Defn}
Recall that all probability measures are infinitely divisible in the Boolean sense. The Boolean-to-free Bercovici-Pata bijection (see Section~6 in \cite{BerPatDomains})
\[
\mf{B} : \mc{P} \rightarrow \mc{ID}^\boxplus
\]
is defined by
\[
\phi_{\mf{B}[\mu]}(z) = z - F_\mu(z).
\]
More generally, define the Belinschi-Nica transformations \cite{Belinschi-Nica-B_t} $\set{\mf{B}_t : t \geq 0}$ on $\mc{P}$ by
\[
\mf{B}_t[\mu] = \left( \mu^{\boxplus (1+t)} \right)^{\uplus \frac{1}{1+t}}.
\]
These transformations form a semigroup under composition, and $\mf{B}_1 = \mf{B}$.
\end{Defn}

\begin{Remark}
Note that
\[
\phi_{\mf{B}[\Phi[\rho]]}(z) = z - F_{\Phi[\rho]}(z) = G_\rho(z).
\]
So for a free convolution semigroup $\set{\mu_t : t \geq 0}$, equation~\eqref{Eq:Maassen-rep} is equivalent to
\begin{equation}
\label{Eq:Maassen-rep-free}
\mu_t = \delta_{\beta t} \boxplus \mf{B}[\Phi[\rho]]^{\boxplus \gamma t}.
\end{equation}
\end{Remark}

\begin{Defn}
For $\mu, \nu \in \mc{P}$, the subordination distribution \cite{Lenczewski-Decompositions-convolution,Nica-Subordination} $\mu \boxright \nu$ is the unique probability measure such that
\[
G_{\mu \boxplus \nu}(z) = G_\nu(F_{\mu \boxright \nu}(z)).
\]
Here $F_{\mu \boxright \nu}$ is the corresponding subordination function of $\mu \boxplus \nu$ with respect to $\nu$. If $\mu \boxright \nu \in \mc{ID}^\boxplus$, define \cite{AnsEvolution}
\[
\Phi[\mu, \nu] = \mf{B}^{-1}[\mu \boxright \nu].
\]
\end{Defn}

\begin{Lemma}
\label{Lemma:Subord-composition}
On a common domain,
\[
\phi_{\mu \boxright \nu}(z) = (\phi_\mu \circ F_\nu) (z).
\]
Also, whenever $\Phi[\mu, \nu]$ is defined,
\[
z - F_{\Phi[\mu,\nu]}(z) = (\phi_\mu \circ F_\nu)(z)
\]
and
\[
\phi_\mu = \phi_{\Phi[\mu, \nu], \nu}.
\]
\end{Lemma}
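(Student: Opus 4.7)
The plan is to derive the first identity directly from the subordination relation $F_{\mu \boxplus \nu}(z) = F_\nu(F_{\mu \boxright \nu}(z))$ and the two facts $F_\xi^{-1}(w) = w + \phi_\xi(w)$ and $\phi_{\mu \boxplus \nu} = \phi_\mu + \phi_\nu$. Once that is in hand, the second and third identities are essentially re-readings of it using the definitions of $\Phi[\mu,\nu]$ and of the two-state Voiculescu transform.

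First I would fix $z$ in a common domain (a truncated Stolz angle at infinity on which all the relevant compositional inverses exist, as in \cite{BV93,Belinschi-Bercovici-Partially-defined}) and set $u = F_{\mu \boxright \nu}(z)$ and $w = F_{\mu \boxplus \nu}(z) = F_\nu(u)$. Applying $F_{\mu \boxplus \nu}^{-1}$ gives
\[
z = w + \phi_{\mu \boxplus \nu}(w) = w + \phi_\mu(w) + \phi_\nu(w),
\]
while applying $F_\nu^{-1}$ to $w = F_\nu(u)$ gives $u = w + \phi_\nu(w)$. Subtracting yields
\[
z - u = \phi_\mu(w) = \phi_\mu(F_\nu(u)).
\]
Now $z - u = F_{\mu \boxright \nu}^{-1}(u) - u = \phi_{\mu \boxright \nu}(u)$, so $\phi_{\mu \boxright \nu}(u) = \phi_\mu(F_\nu(u))$, which is the first identity after relabelling $u$ as $z$.

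For the second identity, by definition $\Phi[\mu,\nu] = \mf{B}^{-1}[\mu \boxright \nu]$ whenever the left-hand side is defined, so using the formula $\phi_{\mf{B}[\xi]}(z) = z - F_\xi(z)$ with $\xi = \Phi[\mu,\nu]$ we get
\[
z - F_{\Phi[\mu,\nu]}(z) = \phi_{\mf{B}[\Phi[\mu,\nu]]}(z) = \phi_{\mu \boxright \nu}(z) = (\phi_\mu \circ F_\nu)(z),
\]
the last equality by the first identity. For the third identity, the defining equation~\eqref{Eq:Two-state-R-transform} for $\phi_{\Phi[\mu,\nu], \nu}$ reads
\[
\phi_{\Phi[\mu,\nu], \nu}(F_\nu(z)) = z - F_{\Phi[\mu,\nu]}(z),
\]
and the right-hand side equals $\phi_\mu(F_\nu(z))$ by the second identity, so $\phi_{\Phi[\mu,\nu], \nu} = \phi_\mu$ on the range of $F_\nu$, which is where both sides are specified.

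I do not anticipate a real obstacle: the computation is a short chase through the defining relations. The only mild subtlety is bookkeeping for the shared domain on which all the inverses and compositions are simultaneously defined, but the standard Stolz-angle argument handles this uniformly.
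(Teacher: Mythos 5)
Your argument is correct and is essentially the paper's own proof: both derive the first identity by writing $F_{\mu\boxright\nu}^{-1}=F_{\mu\boxplus\nu}^{-1}\circ F_\nu$ from the subordination relation, expanding the inverses via $F_\xi^{-1}(w)=w+\phi_\xi(w)$, and using additivity of $\phi$ under $\boxplus$; the only difference is that you evaluate at $u=F_{\mu\boxright\nu}(z)$ and relabel, while the paper substitutes directly. The deductions of the second and third identities from the definitions of $\mf{B}$ and of $\phi_{\widetilde\mu,\mu}$ also match the paper.
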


\begin{proof}
We compute
\[
\begin{split}
\phi_{\mu \boxright \nu}(z)
& = F_{\mu \boxright \nu}^{-1}(z) - z
= (F_{\mu \boxplus \nu}^{-1} \circ F_\nu)(z) - z \\
& = \bigl(\phi_{\mu \boxplus \nu}(F_\nu(z)) + F_\nu(z) \bigr) - \bigl(\phi_\nu(F_\nu(z)) + F_\nu(z) \bigr)
= (\phi_\mu \circ F_\nu) (z).
\end{split}
\]
The second property follows by combining this with the definition of $\mf{B}$. Finally,
\[
(\phi_\mu \circ F_\nu)(z) + F_{\Phi[\mu, \nu]}(z)
= z
\]
which implies the third property after comparison with equation~\eqref{Eq:Two-state-R-transform}.
\end{proof}

The following result is the analog of Corollary~4.13 in \cite{Nica-Subordination} for single-variable, unbounded distributions.

\begin{Lemma}
\label{Lemma:Subordination-ID}
If $\mu \in \mc{ID}^\boxplus$, or if $\nu = \mu \boxplus \nu'$, then $\mu \boxright \nu \in  \mc{ID}^\boxplus$.
\end{Lemma}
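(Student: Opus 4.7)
The plan is to reduce both hypotheses to the identity $\phi_{\mu \boxright \nu}(z) = \phi_\mu(F_\nu(z))$ supplied by Lemma~\ref{Lemma:Subord-composition}, and then verify freely infinite divisibility in two different ways.

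For the first case, assume $\mu \in \mc{ID}^\boxplus$, so that $\mu^{\boxplus t}$ is a bona fide probability measure for every $t > 0$. Define $\sigma_t := \mu^{\boxplus t} \boxright \nu$, which is a probability measure by the very definition of subordination distribution. Using Lemma~\ref{Lemma:Subord-composition} again together with $\phi_{\mu^{\boxplus t}} = t\phi_\mu$, one computes $\phi_{\sigma_t}(z) = t\phi_\mu(F_\nu(z)) = t\phi_{\mu \boxright \nu}(z)$, so that $\{\sigma_t : t \geq 0\}$ is a $\boxplus$-convolution semigroup with $\sigma_1 = \mu \boxright \nu$, and weak continuity in $t$ is inherited from the semigroup $\{\mu^{\boxplus t}\}$. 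Hence $\mu \boxright \nu \in \mc{ID}^\boxplus$.

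The second case is the delicate one, since when $\mu$ is not assumed $\boxplus$-infinitely divisible the naive family $\mu^{\boxplus t}$ is unavailable for $t < 1$. Instead, I would exploit the hypothesis $\nu = \mu \boxplus \nu'$ through the corresponding subordination relation for $\nu' \boxright \mu$, which reads $G_\nu(z) = G_\mu(F_{\nu' \boxright \mu}(z))$, equivalently $F_\nu = F_\mu \circ F_{\nu' \boxright \mu}$. Substituting and using the defining identity $\phi_\mu(F_\mu(w)) + F_\mu(w) = w$ with $w = F_{\nu' \boxright \mu}(z)$ gives the key rewriting
\[
\phi_{\mu \boxright \nu}(z) = \phi_\mu\bigl(F_\mu(F_{\nu' \boxright \mu}(z))\bigr) = F_{\nu' \boxright \mu}(z) - F_\nu(z),
\]
a difference of two $F$-transforms of honest probability measures, hence analytic on all of $\mf{H}^+$. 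To recognize this as the Voiculescu transform of an infinitely divisible measure, I would invoke the standard Bercovici--Voiculescu criterion ($\phi(\mf{H}^+) \subset \mf{H}^- \cup \mf{R}$ together with $\phi(iy)/(iy) \to 0$). Setting $w = F_{\nu' \boxright \mu}(z)$, the Nevanlinna bound $\Im F_\mu(w) \geq \Im w$ for any $F$-transform yields $\Im F_\nu(z) \geq \Im F_{\nu' \boxright \mu}(z)$, so $\Im \phi_{\mu \boxright \nu}(z) \leq 0$ on $\mf{H}^+$; the asymptotic $\phi_{\mu \boxright \nu}(iy)/(iy) \to 0$ follows from $F_\lambda(iy)/(iy) \to 1$ for any probability measure $\lambda$.

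The main obstacle is precisely the second case: one cannot simply imitate the first case by taking free powers of $\mu$, and producing the family $(\mu \boxright \nu)^{\boxplus t}$ directly as subordination distributions of probability measures for $t \in (0,1)$ is unclear. The trick is to bypass this explicit construction entirely by reducing the problem to a Pick-function check, and to manufacture the necessary algebraic identity $\phi_{\mu \boxright \nu} = F_{\nu' \boxright \mu} - F_\nu$ from the two subordination relations coming from the decomposition $\nu = \mu \boxplus \nu'$.
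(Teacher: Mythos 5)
Your proof is correct, and case (a) coincides with the paper's argument: both identify $(\mu \boxright \nu)^{\boxplus t}$ with $\mu^{\boxplus t} \boxright \nu$ by checking that the Voiculescu transforms scale linearly in $t$. In case (b) you take a genuinely different route. The paper works with the subordination function of $\mu \boxplus \nu'$ with respect to $\nu'$: writing $\phi_\mu = \phi_{\mu \boxplus \nu'} - \phi_{\nu'}$ it arrives at $\phi_{\mu \boxright \nu}(z) = z - F_{\mu \boxright \nu'}(z) = \phi_{\mf{B}[\mu \boxright \nu']}(z)$, so that $\mu \boxright \nu$ is identified outright as $\mf{B}[\mu \boxright \nu']$, the image under the Boolean-to-free bijection of an honest probability measure, and membership in $\mc{ID}^\boxplus$ requires no further analysis. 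You instead use the subordination function with respect to $\mu$, obtaining $\phi_{\mu \boxright \nu} = F_{\nu' \boxright \mu} - F_\nu$, and then verify the Bercovici--Voiculescu characterization of $\mc{ID}^\boxplus$ by hand (global analytic extension to $\mf{C}^+$ with values in $\mf{C}^- \cup \mf{R}$, plus $\phi(iy)/(iy) \to 0$) via the Nevanlinna inequality $\Im F_\mu(w) \geq \Im w$. The two formulas are consistent, being related by the standard identity $F_{\mu \boxright \nu'}(z) + F_{\nu' \boxright \mu}(z) = z + F_{\mu \boxplus \nu'}(z)$. The paper's route buys the explicit identification $\mu \boxright \nu = \mf{B}[\mu \boxright \nu']$ (the single-variable shadow of Corollary~4.13 of \cite{Nica-Subordination}) and avoids invoking the external characterization theorem; yours reduces the positivity input to the single soft fact that the subordination function $F_{\nu' \boxright \mu}$ is defined on all of $\mf{C}^+$, followed by an elementary Pick-function estimate. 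Both arguments are complete and legitimate.
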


\begin{proof}
If $\mu \in \mc{ID}^\boxplus$, then for any $t \geq 0$,
\[
\phi_{\mu^{\boxplus t} \boxright \nu}(z) = \phi_{\mu^{\boxplus t}}(F_\nu(z)) = t \phi_\mu(F_\nu(z)) = \phi_{(\mu \boxright \nu)^{\boxplus t}}(z),
\]
and so $(\mu \boxright \nu)^{\boxplus t} = \mu^{\boxplus t} \boxright \nu$ is well defined.

If $\nu = \mu \boxplus \nu'$, then
\[
\begin{split}
\phi_{\mu \boxright \nu}(z)
& = \phi_{\mu \boxright (\mu \boxplus \nu')}(z)
= \phi_\mu(F_{\mu \boxplus \nu'}(z)) \\
& = \phi_{\mu \boxplus \nu'}(F_{\mu \boxplus \nu'}(z)) - \phi_{\nu'}(F_{\mu \boxplus \nu'}(z))
= z - F_{\mu \boxplus \nu'}(z) - \phi_{\nu'}(F_{\mu \boxplus \nu'}(z)) \\
& = z - F_{\nu'}^{-1}(F_{\mu \boxplus \nu'}(z))
= z - F_{\mu \boxright \nu'}(z)
= \phi_{\mf{B}[\mu \boxright \nu']}(z),
\end{split}
\]
and so $\mu \boxright \nu = \mf{B}[\mu \boxright \nu'] \in \mc{ID}^\boxplus$.
\end{proof}

\begin{Lemma}
\label{Lemma:Belinschi-Nica-mean}
For $(\beta, \gamma, \rho)$ a canonical triple and $t \geq 0$,
\[
\mf{B}_t[\delta_{\beta} \uplus \Phi[\rho]^{\uplus \gamma}]
= \delta_{\beta} \uplus \Phi[\rho \boxplus \delta_{\beta t} \boxplus \sigma^{\boxplus \gamma t}]^{\uplus \gamma}.
\]
\end{Lemma}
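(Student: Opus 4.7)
My plan is to compute the $F$-transforms of both sides and verify that they agree. Write $\nu := \delta_\beta \uplus \Phi[\rho]^{\uplus \gamma}$ and $\widetilde{\rho} := \rho \boxplus \delta_{\beta t} \boxplus \sigma^{\boxplus \gamma t}$. From the Boolean scaling rule $z - F_{\mu^{\uplus s}}(z) = s(z - F_\mu(z))$, the identity $z - F_{\delta_\beta}(z) = \beta$, and the definition $F_{\Phi[\rho]}(z) = z - G_\rho(z)$, one immediately obtains
\[
z - F_\nu(z) = \beta + \gamma G_\rho(z), \qquad z - F_{\delta_\beta \uplus \Phi[\widetilde{\rho}]^{\uplus \gamma}}(z) = \beta + \gamma G_{\widetilde{\rho}}(z).
\]
So the task reduces to showing $z - F_{\mf{B}_t[\nu]}(z) = \beta + \gamma G_{\widetilde{\rho}}(z)$.

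For the main step I would unpack the defining identity $\mf{B}_t[\nu]^{\uplus (1+t)} = \nu^{\boxplus (1+t)}$, which at the Boolean level reads $(1+t)(z - F_{\mf{B}_t[\nu]}(z)) = z - F_{\nu^{\boxplus(1+t)}}(z)$. Since $\phi_\nu$ is only implicitly determined, rather than trying to solve for it globally I would parametrize by $z = F_\nu(x)$. Then $\phi_\nu(F_\nu(x)) = x - F_\nu(x) = \beta + \gamma G_\rho(x)$, and combining with the free scaling formula $F_{\nu^{\boxplus(1+t)}}^{-1}(y) = y + (1+t)\phi_\nu(y)$ gives
\[
F_{\nu^{\boxplus(1+t)}}(T(x)) = F_\nu(x), \qquad T(x) := x + t\beta + t\gamma G_\rho(x).
\]
Substituting $z = T(x)$ into the Boolean-level relation then yields the clean identity $T(x) - F_{\mf{B}_t[\nu]}(T(x)) = \beta + \gamma G_\rho(x)$, so that the lemma reduces to the single equation $G_{\widetilde{\rho}}(T(x)) = G_\rho(x)$.

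I would close the argument by recognizing this remaining equation as the subordination relation. Set $\tau := \delta_{\beta t} \boxplus \sigma^{\boxplus \gamma t}$ so that $\widetilde{\rho} = \rho \boxplus \tau$ and $\phi_\tau(w) = \beta t + \gamma t / w$. By Lemma~\ref{Lemma:Subord-composition},
\[
\phi_{\tau \boxright \rho}(x) = \phi_\tau(F_\rho(x)) = \beta t + \gamma t G_\rho(x),
\]
which is exactly $T(x) - x$; hence $F_{\tau \boxright \rho}^{-1}(x) = T(x)$. The defining property $G_{\rho \boxplus \tau}(z) = G_\rho(F_{\tau \boxright \rho}(z))$ then gives $G_{\widetilde{\rho}}(T(x)) = G_\rho(F_{\tau \boxright \rho}(T(x))) = G_\rho(x)$, as required. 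The case $\gamma = 0$ degenerates to $\mf{B}_t[\delta_\beta] = \delta_\beta$, which is immediate.

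The main obstacle is the parametrization in the second paragraph: one has to keep track of the inversions defining $F_{\nu^{\boxplus(1+t)}}$ and $F_{\mf{B}_t[\nu]}$ without closed forms for $\phi_\nu$ off the image of $F_\nu$. The conceptual punchline that makes the computation collapse is that the auxiliary function $T(x) = x + t(\beta + \gamma G_\rho(x))$, which appears naturally on the $\mf{B}_t$ side, is precisely the inverse of the subordination function $F_{\tau \boxright \rho}$ for $\rho \boxplus \tau$ with respect to $\rho$; once this is spotted, the proof is a one-line application of subordination.
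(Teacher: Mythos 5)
Your argument is correct, and it takes a recognizably different route from the paper's. The paper adapts Remark~4.4 of \cite{Belinschi-Nica-B_t}: it introduces the subordination function $\theta$ of $\rho \boxplus \delta_{\beta t} \boxplus \sigma^{\boxplus \gamma t}$ with respect to $\rho$, uses the explicit formula (4.8) of that paper for the semicircular subordination function to show that $\theta$ satisfies the fixed-point equation characterizing the subordination function $\omega$ of $\nu^{\boxplus(t+1)}$ with respect to $\nu = \delta_\beta \uplus \Phi[\rho]^{\uplus\gamma}$, concludes $\theta = \omega$, and then invokes the Belinschi--Nica formula $z - F_{\mf{B}_t[\nu]}(z) = \tfrac{1}{t}(z - \omega(z))$. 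You instead work directly from the definition $\mf{B}_t[\nu] = \bigl(\nu^{\boxplus(1+t)}\bigr)^{\uplus 1/(1+t)}$, pull everything back through the parametrization $z = T(x)$ with $T = F_{\nu^{\boxplus(1+t)}}^{-1}\circ F_\nu$, and recognize $T$ as $F_{\tau \boxright \rho}^{-1}$ via Lemma~\ref{Lemma:Subord-composition} with $\tau = \delta_{\beta t} \boxplus \sigma^{\boxplus \gamma t}$. The two proofs hinge on the same object --- the explicit subordination function $z \mapsto z - \beta t - \gamma t\, G_{\rho\boxplus\tau}(z)$, which is exactly the inverse of your $T$ --- but yours is more self-contained: it needs only the definitions of $\mf{B}_t$, the Voiculescu transform, and the subordination distribution, and avoids importing the fixed-point characterization of $\omega$ and the $\mf{B}_t$-formula from \cite{Belinschi-Nica-B_t}. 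The price, which you correctly flag, is the domain bookkeeping for the inverses $F_{\nu^{\boxplus(1+t)}}^{-1}$ and $F_{\tau\boxright\rho}^{-1}$: the identity $T(x) - F_{\mf{B}_t[\nu]}(T(x)) = \beta + \gamma G_{\widetilde\rho}(T(x))$ is first obtained only on the image of a truncated cone under $T$, and one must appeal to analytic continuation to get it on all of $\mf{C}^+$; this is the same level of care (``on a domain'') that the paper itself exercises, so it is not a gap.
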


\begin{proof}
For $\gamma = 0$, the identity reduces to $\mf{B}_t[\delta_{\beta}] = \delta_\beta$. The argument for $\gamma > 0$ is a slight modification of Remark~4.4 (proof of Theorem~1.6) from \cite{Belinschi-Nica-B_t}. Following that paper, denote by $\theta$ the subordination function of $\rho \boxplus \delta_{\beta t} \boxplus \sigma^{\boxplus \gamma t}$ with respect to $\rho$, and by $\omega$ the subordination function of $(\delta_\beta \uplus \Phi[\rho]^{\uplus \gamma})^{\boxplus (t+1)}$ with respect to $(\delta_\beta \uplus \Phi[\rho]^{\uplus \gamma})$. On the one hand,
\[
G_{\rho \boxplus \delta_{\beta t} \boxplus \sigma^{\boxplus \gamma t}}(z) = G_\rho(\theta(z))
\]
and
\[
z - F_{\delta_\beta \uplus \Phi[\rho]^{\uplus \gamma}}(z) = \beta + \gamma G_\rho(z).
\]
Therefore
\begin{equation}
\label{Eq:theta-1}
\theta(z) - F_{\delta_\beta \uplus \Phi[\rho]^{\uplus \gamma}}(\theta(z)) = \beta + \gamma G_{\rho \boxplus \delta_{\beta t} \boxplus \sigma^{\boxplus \gamma t}}(z).
\end{equation}
On the other hand, denoting by $\widetilde{\theta}$ the subordination function of $\rho \boxplus \sigma^{\boxplus \gamma t}$ with respect to $\rho$, by equation~(4.8) in \cite{Belinschi-Nica-B_t},
\[
\widetilde{\theta} (z) = z - \gamma t G_{\rho \boxplus \sigma^{\boxplus \gamma t}}(z).
\]
But
\[
G_{\rho \boxplus \delta_{\beta t} \boxplus \sigma^{\boxplus \gamma t}}(z) = G_{\rho \boxplus \sigma^{\boxplus \gamma t}}(z - \beta t)
= G_\rho(\widetilde{\theta}(z - \beta t))
= G_\rho(\theta(z)).
\]
Thus
\[
\theta(z) = \widetilde{\theta}(z - \beta t)
= z - \beta t - \gamma t G_{\rho \boxplus \sigma^{\boxplus \gamma t}}(z - \beta t)
= z - \beta t - \gamma t G_{\rho \boxplus \delta_{\beta t} \boxplus \sigma^{\boxplus \gamma t}}(z).
\]
Combining this with equation~\eqref{Eq:theta-1}, we see that
\[
t \theta(z) - t F_{\delta_\beta \uplus \Phi[\rho]^{\uplus \gamma}}(\theta(z)) = z - \theta(z)
\]
and
\[
\theta(z) = \frac{1}{t+1} z + \left( 1 - \frac{1}{t+1} \right) F_{\delta_\beta \uplus \Phi[\rho]^{\uplus \gamma}}(\theta(z)).
\]
Then (see \cite{Belinschi-Nica-B_t}) it follows that $\theta = \omega$, and so the argument concludes as in that paper:
\[
\begin{split}
z - F_{\mf{B}_t[\delta_{\beta} \uplus \Phi[\rho]^{\uplus \gamma}]}(z)
& = z - \left( \left(1 - \frac{1}{t} \right) z + \frac{1}{t} \omega(z) \right) \\
& = \frac{1}{t} (z - \omega(z))
= \frac{1}{t} (z - \theta(z)) \\
& = \beta + \gamma G_{\rho \boxplus \delta_{\beta t} \boxplus \sigma^{\gamma t}}(z)
= z - F_{\delta_{\beta} \uplus \Phi[\rho \boxplus \delta_{\beta t} \boxplus \sigma^{\boxplus \gamma t}]^{\uplus \gamma}}(z). \qedhere
\end{split}
\]
\end{proof}

\section{Single-variable, complex-analytic results}
\label{Section:Complex}

\begin{Prop}
\label{Prop:Free-evolution}
For any a canonical triple $(\beta, \gamma, \rho)$, the corresponding free convolution semigroup is
\[
\mu_t = \delta_{\beta t} \uplus \Phi[\rho \boxplus \sigma_{\beta, \gamma}^{\boxplus t}]^{\uplus \gamma t}
\]
In particular, for any free convolution semigroup with non-zero, finite variance,
\[
\mc{J}[\mu_t] = \rho \boxplus \sigma_{\beta, \gamma}^{\boxplus t}.
\]
\end{Prop}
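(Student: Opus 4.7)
The strategy is to reduce the first identity to the already-proven Lemma~\ref{Lemma:Belinschi-Nica-mean} via the Maassen representation~\eqref{Eq:Maassen-rep-free}, and then read off $\mc{J}[\mu_t]$ from the resulting Boolean-convolution formula. All the analytic substance has been packaged into Lemma~\ref{Lemma:Belinschi-Nica-mean}, so the rest ought to be symbolic manipulation using the defining properties of $\mf{B}$, $\Phi$, $\mc{J}$ and the compatibilities between $\boxplus$ and $\uplus$.

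First I would rewrite \eqref{Eq:Maassen-rep-free} in a form where Lemma~\ref{Lemma:Belinschi-Nica-mean} applies directly. From $\phi_{\mf{B}[\mu]}(z) = z - F_\mu(z)$ together with the additivity of $z - F_\cdot(z)$ under $\uplus$, one gets the standard identities $\mf{B}[\delta_\beta] = \delta_\beta$, $\mf{B}[\mu]^{\boxplus s} = \mf{B}[\mu^{\uplus s}]$, and $\mf{B}[\mu \uplus \nu] = \mf{B}[\mu] \boxplus \mf{B}[\nu]$. Chaining these,
\[
\mu_t \;=\; \delta_{\beta t} \boxplus \mf{B}[\Phi[\rho]]^{\boxplus \gamma t} \;=\; \mf{B}\bigl[\delta_{\beta t} \uplus \Phi[\rho]^{\uplus \gamma t}\bigr].
\]

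Next I would apply Lemma~\ref{Lemma:Belinschi-Nica-mean} to the canonical triple $(\beta t, \gamma t, \rho)$ with evolution parameter $1$ (so $\mf{B}_1 = \mf{B}$), obtaining
\[
\mu_t \;=\; \delta_{\beta t} \uplus \Phi\bigl[\rho \boxplus \delta_{\beta t} \boxplus \sigma^{\boxplus \gamma t}\bigr]^{\uplus \gamma t}
\;=\; \delta_{\beta t} \uplus \Phi\bigl[\rho \boxplus \sigma_{\beta, \gamma}^{\boxplus t}\bigr]^{\uplus \gamma t},
\]
using $\delta_{\beta t} \boxplus \sigma^{\boxplus \gamma t} = \sigma_{\beta t, \gamma t} = \sigma_{\beta,\gamma}^{\boxplus t}$. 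This proves the first formula.

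For the stripping statement, I would exploit the Boolean-convolution form just obtained: additivity of $z - F_\cdot(z)$ under $\uplus$ gives
\[
z - F_{\mu_t}(z) \;=\; \beta t \,+\, \gamma t\, G_{\rho \boxplus \sigma_{\beta,\gamma}^{\boxplus t}}(z).
\]
Since $\mu_t$ has mean $\beta t$ and variance $\gamma t$, comparing with the defining relation $F_{\mu_t}(z) = z - \beta t - \gamma t\, G_{\mc{J}[\mu_t]}(z)$ from Definition~\ref{Defn:Phi-J} immediately yields $\mc{J}[\mu_t] = \rho \boxplus \sigma_{\beta, \gamma}^{\boxplus t}$. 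The only nontrivial ingredient is Lemma~\ref{Lemma:Belinschi-Nica-mean}, where the real computational work (tracking both the mean shift $\beta$ and the variance scaling $\gamma$ through the Belinschi-Nica subordination argument) was carried out; once that lemma is available, the proposition follows by assembling pieces.
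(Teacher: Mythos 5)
Your proof is correct and follows essentially the same route as the paper: both start from the Maassen representation~\eqref{Eq:Maassen-rep-free} and reduce everything to Lemma~\ref{Lemma:Belinschi-Nica-mean}, with the ``in particular'' clause read off from the Boolean-convolution form via the definitions of $\Phi$ and $\mc{J}$. The only difference is cosmetic --- the paper invokes the lemma with the triple $(\beta,\gamma,\rho)$ at time $t$, using the semigroup property $\mf{B}_{t-1}\circ\mf{B}_1=\mf{B}_t$ and a Boolean power $\uplus t$, whereas you invoke it with the rescaled triple $(\beta t,\gamma t,\rho)$ at time $1$; both are valid, and the paper additionally records the trivial $\gamma=0$ case, which you leave implicit.
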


\begin{proof}
A free convolution semigroup with finite variance $\set{\mu_t}$ can be re-written as
\[
\begin{split}
\mu_t
& = \delta_{\beta t} \boxplus \mf{B}[\Phi[\rho]]^{\boxplus \gamma t} \qquad  \text{(by the Maassen representation~\eqref{Eq:Maassen-rep-free})} \\
& = \mf{B}_{t-1}[\delta_{\beta} \boxplus \mf{B}[\Phi[\rho]]^{\gamma}]^{\uplus t} \qquad \text{(by definition of $\mf{B}_{t-1}$)} \\
& = \mf{B}_t[\delta_{\beta} \uplus \Phi[\rho]^{\uplus \gamma}]^{\uplus t} \qquad \text{(by definition of $\mf{B} = \mf{B}_1$)} \\
& = \delta_{\beta t} \uplus \Phi[\rho \boxplus \delta_{\beta t} \boxplus \sigma^{\boxplus \gamma t}]^{\uplus \gamma t} \qquad \text{(by Lemma~\ref{Lemma:Belinschi-Nica-mean})} \\
& = \delta_{\beta t} \uplus \Phi[\rho \boxplus \sigma_{\beta, \gamma}^{\boxplus t}]^{\uplus \gamma t} \qquad \text{(by definition of $\sigma_{\beta, \gamma}$)}.
\end{split}
\]
For $\gamma = 0$, we have $\mu_t = \delta_{\beta t} = \sigma_{\beta t, 0}$, so the equation still holds.
\end{proof}

\begin{Lemma}
\label{Lemma:Monotone}
For a two-state free convolution semigroup $\set{(\widetilde{\mu}_t, \mu_t) : t \geq 0}$ with the relative canonical triple $(\widetilde{\beta}, \widetilde{\gamma}, \widetilde{\rho})$,
\[
\widetilde{\mu}_t = \delta_{\widetilde{\beta} t} \uplus \Phi[\widetilde{\rho} \rhd \mu_t]^{\uplus \widetilde{\gamma} t},
\]
In particular, whenever $\widetilde{\gamma} > 0$,
\[
\mc{J}[\widetilde{\mu}_t] = \widetilde{\rho} \rhd \mu_t.
\]
\end{Lemma}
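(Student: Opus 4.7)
The plan is to unwind the two-state Voiculescu equation~\eqref{Eq:Two-state-R-transform} using the Maassen-type representation~\eqref{Eq:Two-state-Maassen}, and then recognize the resulting expression as the $F$-transform of the Boolean convolution on the right-hand side. Substituting $\phi_{\widetilde{\mu}_t,\mu_t}(w) = \widetilde{\beta} t + \widetilde{\gamma} t\, G_{\widetilde{\rho}}(w)$ into~\eqref{Eq:Two-state-R-transform} evaluated at $w = F_{\mu_t}(z)$ yields
\[
F_{\widetilde{\mu}_t}(z) = z - \widetilde{\beta} t - \widetilde{\gamma} t\, G_{\widetilde{\rho}}(F_{\mu_t}(z)).
\]
By the definition of monotone convolution, $F_{\widetilde{\rho} \rhd \mu_t} = F_{\widetilde{\rho}} \circ F_{\mu_t}$, and since $G_\nu = 1/F_\nu$, the last term equals $\widetilde{\gamma} t\, G_{\widetilde{\rho} \rhd \mu_t}(z)$, so
\[
F_{\widetilde{\mu}_t}(z) = z - \widetilde{\beta} t - \widetilde{\gamma} t\, G_{\widetilde{\rho} \rhd \mu_t}(z).
\]

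To match this with the first formula in the claim, I would apply $z - F_{(\cdot)}$ to $\delta_{\widetilde{\beta} t} \uplus \Phi[\widetilde{\rho} \rhd \mu_t]^{\uplus \widetilde{\gamma} t}$, using Boolean additivity $z - F_{\mu \uplus \nu} = (z - F_\mu) + (z - F_\nu)$ together with $z - F_{\delta_{\widetilde{\beta} t}}(z) = \widetilde{\beta} t$ and the defining relation $z - F_{\Phi[\nu]}(z) = G_\nu(z)$ of Definition~\ref{Defn:Phi-J}. The outcome is exactly $\widetilde{\beta} t + \widetilde{\gamma} t\, G_{\widetilde{\rho} \rhd \mu_t}(z)$, so the two $F$-transforms coincide and the first assertion follows. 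For the second assertion, expanding $G_{\widetilde{\rho} \rhd \mu_t}(z) = 1/z + O(1/z^2)$ at infinity in the displayed formula for $F_{\widetilde{\mu}_t}$ reads off $m[\widetilde{\mu}_t] = \widetilde{\beta} t$ and $\Var[\widetilde{\mu}_t] = \widetilde{\gamma} t$; under the hypothesis $\widetilde{\gamma} > 0$ the variance is nonzero, so Definition~\ref{Defn:Phi-J} applies and the identity $F_{\widetilde{\mu}_t}(z) = z - \widetilde{\beta} t - \widetilde{\gamma} t\, G_{\widetilde{\rho} \rhd \mu_t}(z)$ identifies $\mc{J}[\widetilde{\mu}_t] = \widetilde{\rho} \rhd \mu_t$.

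There is no serious obstacle here: the argument is a direct unwinding of definitions, the only conceptual step being the observation that $G_{\widetilde{\rho}} \circ F_{\mu_t}$ is precisely $G_{\widetilde{\rho} \rhd \mu_t}$, which makes the monotone convolution appear naturally from the two-state Voiculescu transform. The hypotheses (finite variance of $\widetilde{\mu}_1$ for~\eqref{Eq:Two-state-Maassen}, and $\widetilde{\gamma} > 0$ for $\mc{J}$ to be defined on $\widetilde{\mu}_t$) are both built into the statement.
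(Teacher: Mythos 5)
Your proposal is correct and follows essentially the same route as the paper: both arguments compute $z - F$ of each side, use $\phi_{\widetilde{\mu}_t,\mu_t}\circ F_{\mu_t} = \widetilde{\beta}t + \widetilde{\gamma}t\,G_{\widetilde{\rho}}\circ F_{\mu_t}$ from \eqref{Eq:Two-state-R-transform} and \eqref{Eq:Two-state-Maassen}, and identify $G_{\widetilde{\rho}}\circ F_{\mu_t} = G_{\widetilde{\rho}\rhd\mu_t}$. Your extra step reading off the mean and variance to justify applying $\mc{J}$ is left implicit in the paper but is a harmless addition.
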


\begin{proof}
Using the properties of Boolean and monotone convolutions and the definition of $\Phi$,
\[
z - F_{\delta_{\widetilde{\beta} t} \uplus \Phi[\widetilde{\rho} \rhd \mu_t]^{\uplus \widetilde{\gamma} t}}(z)
= \widetilde{\beta} t + \widetilde{\gamma} t G_{\widetilde{\rho} \rhd \mu_t}(z)
= \widetilde{\beta} t + \widetilde{\gamma} t G_{\widetilde{\rho}}(F_{\mu_t}(z))
\]
On the other hand, by formulas \eqref{Eq:Two-state-R-transform} and \eqref{Eq:Two-state-Maassen},
\[
z - F_{\widetilde{\mu}_t}(z)
= (\phi_{\widetilde{\mu}_t, \mu_{t}} \circ F_{\mu_{t}})(z)
= \widetilde{\beta} t + \widetilde{\gamma} t G_{\widetilde{\rho}}(F_{\mu_t}(z))
\]
Comparing these, we obtain the result.
\end{proof}

\begin{Thm}
\label{Thm:Two-state-free-evolution}
Fix $\widetilde{\beta} \in \mf{R}$ and $\widetilde{\gamma} > 0$.
\begin{enumerate}
\item
Let $p > 0$ and $\omega, \widetilde{\rho} \in \mc{P}$. Suppose that
\[
\omega \boxright \widetilde{\rho} \in \mc{ID}^\boxplus.
\]
Then for any $t \geq 0$,
\[
\phi_{\widetilde{\rho}} + (t/p) \phi_\omega
\]
is a Voiculescu transform of a probability measure, and so
\[
\widetilde{\rho} \boxplus \omega^{\boxplus (t/p)}
\]
is well defined.
\item
Under the assumptions of part (a), define
\[
\mu = \left( \omega \boxright \widetilde{\rho} \right)^{\boxplus (1/p)}
\]
$\mu_t = \mu^{\boxplus t}$, and
\[
\widetilde{\mu}_t = \delta_{\widetilde{\beta} t} \uplus \Phi[\widetilde{\rho} \boxplus \omega^{\boxplus (t/p)}]^{\uplus \widetilde{\gamma} t}.
\]
Then $\widetilde{\mu}_1$ has finite, non-zero variance, the family $\set{(\widetilde{\mu}_t, \mu_t) : t \geq 0}$ form a two-state free convolution semigroup, and
\[
\mc{J}[\widetilde{\mu}_t] = \widetilde{\rho} \boxplus \omega^{\boxplus (t/p)}.
\]
\item
Conversely, let $\set{(\widetilde{\mu}_t, \mu_t) : t \geq 0}$ be a general two-state free convolution semigroup of \emph{compactly supported} measures such that $\widetilde{\mu}_1$ has non-zero variance. Then there exist $p > 0$ and $\omega , \widetilde{\rho} \in \mc{P}$ such that $\omega \boxright \widetilde{\rho} \in \mc{ID}^\boxplus$, $\widetilde{\rho} \boxplus \omega^{\boxplus (t/p)}$ is well defined (in the sense of part (a)) for all $t \geq 0$, and the relations in part (b) hold.
\end{enumerate}
\end{Thm}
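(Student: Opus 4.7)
My plan is to commit upfront to the auxiliary identity
$\widetilde\rho \boxplus \omega^{\boxplus(t/p)} = \widetilde\rho \rhd \mu_t$, which will handle part (a) and eliminate all mention of $\omega$ from part (b). Take $\omega \boxright \widetilde\rho \in \mc{ID}^\boxplus$ at face value: then $\mu := (\omega \boxright \widetilde\rho)^{\boxplus(1/p)}$ is a genuine probability measure, $\mu_t := \mu^{\boxplus t}$, and $\nu_t := \widetilde\rho \rhd \mu_t$ is defined unconditionally because monotone convolution always is. From $F_{\nu_t} = F_{\widetilde\rho} \circ F_{\mu_t}$ inverting composition gives
\[
\phi_{\nu_t}(w) = F_{\nu_t}^{-1}(w) - w = \phi_{\widetilde\rho}(w) + \phi_{\mu_t}(F_{\widetilde\rho}^{-1}(w)),
\]
and Lemma~\ref{Lemma:Subord-composition} together with $\phi_{\mu_t} = (t/p)\phi_{\omega \boxright \widetilde\rho} = (t/p)(\phi_\omega \circ F_{\widetilde\rho})$ collapses the last term to $(t/p)\phi_\omega(w)$. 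Thus $\phi_{\widetilde\rho} + (t/p)\phi_\omega$ is the Voiculescu transform of $\nu_t$ on a common Stolz angle at infinity, proving (a) and simultaneously identifying $\widetilde\rho \boxplus \omega^{\boxplus(t/p)} = \nu_t = \widetilde\rho \rhd \mu_t$.

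\textbf{Semigroup and stripping.} Substituting the identification into the definition of $\widetilde\mu_t$ in (b) yields $\widetilde\mu_t = \delta_{\widetilde\beta t} \uplus \Phi[\widetilde\rho \rhd \mu_t]^{\uplus \widetilde\gamma t}$. Unpacking Boolean convolution and the $\Phi$-transform (as in the proof of Lemma~\ref{Lemma:Monotone}) gives
\[
z - F_{\widetilde\mu_t}(z) = \widetilde\beta t + \widetilde\gamma t\,G_{\widetilde\rho \rhd \mu_t}(z) = \widetilde\beta t + \widetilde\gamma t\,G_{\widetilde\rho}(F_{\mu_t}(z)),
\]
and comparison with~\eqref{Eq:Two-state-R-transform} yields $\phi_{\widetilde\mu_t, \mu_t}(w) = \widetilde\beta t + \widetilde\gamma t\,G_{\widetilde\rho}(w) = t\,\phi_{\widetilde\mu_1, \mu_1}(w)$. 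This produces the $\boxplus_c$-semigroup with relative canonical triple $(\widetilde\beta, \widetilde\gamma, \widetilde\rho)$, shows $\widetilde\mu_1$ has mean $\widetilde\beta$ and variance $\widetilde\gamma > 0$, and inverting the Maassen-type construction via Definition~\ref{Defn:Phi-J} gives $\mc{J}[\widetilde\mu_t] = \nu_t = \widetilde\rho \boxplus \omega^{\boxplus(t/p)}$.

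\textbf{Part (c) and the main obstacle.} Extract $(\widetilde\beta, \widetilde\gamma, \widetilde\rho)$ from the relative canonical triple of the given semigroup; this determines $\widetilde\rho$ uniquely. Lemma~\ref{Lemma:Monotone} already supplies $\mc{J}[\widetilde\mu_t] = \widetilde\rho \rhd \mu_t$ with $\mu = \mu_1$, so matching part (b) reduces to exhibiting $p > 0$ and $\omega \in \mc{P}$ with $\omega \boxright \widetilde\rho = \mu^{\boxplus p}$ and $\mu^{\boxplus p} \in \mc{ID}^\boxplus$; reversing the calculation from (a) forces the candidate $\phi_\omega(w) = p\,\phi_\mu(F_{\widetilde\rho}^{-1}(w))$. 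Compact support of $\mu$ enters twice: first, there is a threshold $p_0$ such that $\mu^{\boxplus p} \in \mc{ID}^\boxplus$ for all $p \geq p_0$ (a divisibility-indicator result), and second, $\phi_\mu$ and $F_{\widetilde\rho}^{-1}$ are both analytic on a common Stolz angle at infinity, so the candidate $\phi_\omega$ is at least analytic there. The hard part, which I expect to dominate the proof, is the Nevanlinna-Pick verification: the candidate $\phi_\omega$ must be shown to be the Voiculescu transform of an honest probability measure rather than a signed or formal object. The Herglotz representation of $\phi_{\mu^{\boxplus p}}$ granted by infinite divisibility, composed with the Nevanlinna function $F_{\widetilde\rho}^{-1}$, is the mechanism that delivers this; closing the argument amounts to showing the composition stays in the correct analytic class with the right asymptotics at infinity.
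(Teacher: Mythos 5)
Your parts (a) and (b) are correct and are essentially the paper's own argument: the same chain $\phi_{\widetilde{\rho}} + (t/p)\phi_\omega = \phi_{\widetilde{\rho}\rhd(\omega\boxright\widetilde{\rho})^{\boxplus(t/p)}}$ via Lemma~\ref{Lemma:Subord-composition}, positivity from the monotone convolution, and then Lemma~\ref{Lemma:Monotone} to get the semigroup and the stripping identity.

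Part (c), however, has a genuine gap. You correctly reduce to producing $\omega \in \mc{P}$ with $\phi_\omega = p\,\phi_\mu \circ F_{\widetilde{\rho}}^{-1}$, and you correctly flag that the whole difficulty is showing this candidate is the Voiculescu transform of an honest probability measure --- but the mechanism you propose does not work, and you do not execute it. Composing the Herglotz/Pick representation of $\phi_{\mu^{\boxplus p}}$ with $F_{\widetilde{\rho}}^{-1}$ would, if it succeeded, show that $\phi_\omega$ extends to a Pick function on all of $\mf{C}^+$, i.e.\ that $\omega \in \mc{ID}^\boxplus$; the paper's free Meixner computation (Example~\ref{Example:General-b} and the example following it) shows this is false in general, and indeed for $\gamma + c < \widetilde{c}$ the functional $\tau$ with $\phi_\tau = \phi_\mu\circ F_{\widetilde{\rho}}^{-1}$ is not even positive. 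Since $\phi_\omega = p\,\phi_\tau$, scaling by $p$ cannot turn a non-Pick function into a Pick function, so "Nevanlinna--Pick verification" is the wrong criterion here (it characterizes the infinitely divisible case, not general Voiculescu transforms). Also, your claim that compact support is needed to get $\mu^{\boxplus p}\in\mc{ID}^\boxplus$ for large $p$ is vacuous: $\mu=\mu_1$ is already infinitely divisible. The paper's actual route is different in kind: it passes to the algebraic setting, where Proposition~\ref{Prop:Multivariate-composition} makes $\tau\mapsto\tau\boxright\widetilde{\rho}$ a bijection on formal functionals, so one can solve $\tau\boxright\widetilde{\rho}=\mu$ for a unital but possibly non-positive $\tau$ (Corollary~\ref{Cor:Compactly-supported}); compact support gives $\abs{\tau[x^n]}\le C^n$ and $\Var[\tau]=\Var[\mu]>0$, and then Lemma~\ref{Lemma:Hari} --- Bercovici--Voiculescu superconvergence, not a Pick-function argument --- shows $\omega=\tau^{\boxplus p}$ is a genuine compactly supported measure for $p$ large. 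That superconvergence step is the missing idea in your proposal; without it (or a substitute), part (c) is not proved. You also omit the degenerate case $\mu_t=\delta_{\beta t}$, which the paper handles separately, though that is minor.
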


\begin{proof}
For part (a), using Lemma~\ref{Lemma:Subord-composition},
\begin{equation}
\label{Eq:rho-monotone}
\begin{split}
\phi_{\widetilde{\rho}}(z) + (t/p) \phi_\omega(z)
& = \phi_{\widetilde{\rho}}(z) + (t/p)\phi_{\omega \boxright \widetilde{\rho}}\bigl(F^{-1}_{\widetilde{\rho}}(z) \bigr)
= z - F^{-1}_{\widetilde{\rho}}(z) + \phi_{(\omega \boxright \widetilde{\rho})^{\boxplus (t/p)}}\bigl(F^{-1}_{\widetilde{\rho}}(z) \bigr) \\
& = z - F^{-1}_{(\omega \boxright \widetilde{\rho})^{\boxplus (t/p)}}\bigl(F^{-1}_{\widetilde{\rho}}(z) \bigr)
= z - F^{-1}_{\widetilde{\rho} \rhd (\omega \boxright \widetilde{\rho})^{\boxplus (t/p)}}(z)
= \phi_{\widetilde{\rho} \rhd (\omega \boxright \widetilde{\rho})^{\boxplus (t/p)}}(z).
\end{split}
\end{equation}
Since the monotone convolution is known to preserve positivity, this implies part (a). Next, it is clear that in part (b), $\set{\mu_t : t \geq 0}$ form a free convolution semigroup. From equation~\eqref{Eq:rho-monotone}, it follows that
\[
\widetilde{\rho} \boxplus \omega^{\boxplus (t/p)} = \widetilde{\rho} \rhd \mu_t.
\]
Part (b) now follows from Lemma~\ref{Lemma:Monotone}.

To prove part (c), first suppose that $\mu_t = \delta_{\beta t}$ has zero variance. Then all the relations hold if we set $p=1$, $\omega = \delta_\beta$, and $\widetilde{\rho}$ to be the measure in the relative canonical triple of $\set{(\widetilde{\mu}_t, \mu_t) : t \geq 0}$.

In the remainder of the argument, we assume that $\mu_1$ has non-zero variance, and use results from Section~\ref{Section:Algebraic}. Let $\widetilde{\rho}$ be the measure in the relative canonical triple of $\set{(\widetilde{\mu}_t, \mu_t) : t \geq 0}$. By Corollary~\ref{Cor:Compactly-supported}, there exists a linear, unital, not necessarily positive functional $\tau$ such that
\begin{equation}
\label{Eq:mu}
\tau \boxright \widetilde{\rho} = \mu
\end{equation}
is $\boxplus$-infinitely divisible, and
\[
\widetilde{\rho} \boxplus \tau^{\boxplus p} = \mc{J}[\widetilde{\mu}_t]
\]
can be identified with a positive measure. Moreover, it follows from equation~\eqref{Eq:mu} that $\Var[\tau] = \Var[\mu] > 0$. So by Lemma~\ref{Lemma:Hari}, for sufficiently large $p$, $\omega = \tau^{\boxplus p}$ can itself be identified with a positive measure. The result follows.
\end{proof}

The following corollary is an immediate consequence of Lemma~\ref{Lemma:Subordination-ID}.

\begin{Cor}
\label{Cor}
The assumptions of parts (a,b) of the Theorem are satisfied in the following two cases.
\begin{enumerate}
\item
$\widetilde{\rho} \in \mc{P}$ is arbitrary and $\omega = \tau \in \mc{ID}^\boxplus$. In this case one can, without loss of generality, take $p=1$.
\item
$\omega \in \mc{P}$ is arbitrary, and $\widetilde{\rho} = \nu \boxplus \omega$ for some $\nu \in \mc{P}$.
\end{enumerate}
In particular, for any $\widetilde{\rho} \in \mc{P}$ and $\tau \in \mc{ID}^\boxplus$, there exists a two-state free convolution semigroup $\set{(\widetilde{\mu}_t, \mu_t) : t \geq 0}$ such that
\[
\mc{J}[\widetilde{\mu}_t] = \widetilde{\rho} \boxplus \tau^{\boxplus t}.
\]
\end{Cor}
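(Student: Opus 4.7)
The plan is to recognize this corollary as an immediate consequence of Lemma~\ref{Lemma:Subordination-ID}: in each of the two listed situations that lemma supplies the hypothesis $\omega \boxright \widetilde{\rho} \in \mc{ID}^\boxplus$ demanded by parts (a) and (b) of Theorem~\ref{Thm:Two-state-free-evolution}, and the final assertion is then obtained as a specialization of case (a).

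First, for case (a), I would invoke the first clause of Lemma~\ref{Lemma:Subordination-ID} with the lemma's $\mu$ set to $\tau$ and its $\nu$ set to $\widetilde{\rho}$; since $\omega = \tau$ lies in $\mc{ID}^\boxplus$ by assumption, this yields $\tau \boxright \widetilde{\rho} \in \mc{ID}^\boxplus$. Because $\tau$ is itself $\boxplus$-infinitely divisible, $\tau^{\boxplus t}$ is defined for every $t \geq 0$ without any rescaling, so one may take $p = 1$ without loss of generality. Next, for case (b), I would apply the second clause of the same lemma: the hypothesis $\widetilde{\rho} = \nu \boxplus \omega$ rewrites by commutativity of $\boxplus$ as $\widetilde{\rho} = \omega \boxplus \nu$, so setting the lemma's $\mu$ to $\omega$ and its $\nu'$ to $\nu$ gives $\omega \boxright \widetilde{\rho} \in \mc{ID}^\boxplus$. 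In either case the conclusions of Theorem~\ref{Thm:Two-state-free-evolution}(a,b) then apply verbatim.

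Finally, for the last assertion of the corollary, with arbitrary $\widetilde{\rho} \in \mc{P}$ and $\tau \in \mc{ID}^\boxplus$, I would feed case (a) directly into Theorem~\ref{Thm:Two-state-free-evolution}(b) with $p = 1$ and $\omega = \tau$; the theorem then produces a two-state free convolution semigroup $\set{(\widetilde{\mu}_t, \mu_t) : t \geq 0}$ whose first component satisfies $\mc{J}[\widetilde{\mu}_t] = \widetilde{\rho} \boxplus \tau^{\boxplus t}$, exactly as desired. I do not anticipate any real obstacle here: the corollary is essentially formal once Lemma~\ref{Lemma:Subordination-ID} has done the work of identifying when the subordination distribution is freely infinitely divisible, and Theorem~\ref{Thm:Two-state-free-evolution} has packaged the construction of $\set{(\widetilde{\mu}_t, \mu_t)}$ in a form ready for immediate use.
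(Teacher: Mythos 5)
Your proof is correct and follows exactly the paper's route: the paper states the corollary is an immediate consequence of Lemma~\ref{Lemma:Subordination-ID}, and your two instantiations of that lemma (first clause with $\mu=\tau$, $\nu=\widetilde{\rho}$ for case (a); second clause with $\mu=\omega$, $\nu'=\nu$ for case (b)) together with feeding case (a) into Theorem~\ref{Thm:Two-state-free-evolution}(b) with $p=1$ are precisely what is intended.
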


I am grateful to Serban Belinschi for a discussion leading to the following example.

\begin{Example}
\label{Example:Counterexample}
Recall that the analytic $R$-transform is defined by $R_\mu(z) = \phi_\mu(1/z)$. Let
\[
\tau_\eps = \frac{1}{2} (\delta_{-\eps} + \delta_\eps).
\]
Then
\[
R_{\tau_\eps}(z) = \frac{2 \eps^2 z}{\sqrt{1 + 4 \eps^2 z^2} + 1}
\]
is analytic for $\abs{z} < (2 \eps)^{-1}$ and grows as $\abs{R_{\tau_\eps}(z)} \approx \eps^2 \abs{z}$. It follows from Theorem~2 of \cite{BerVoiSuperconvergence} that for sufficiently small $\eps$,
\[
z + t R_{\tau_\eps}(z)
\]
is an $R$-transform of a positive measure for all $t \in [0,1]$. On the other hand, $\tau_\eps^{\boxplus t}$ is well-defined and positive for all $t \geq 1$. It follows that $\sigma \boxplus \tau_\eps^{\boxplus t}$ is well-defined and positive for all $t \geq 0$. However, $\tau_\eps \not \in \mc{ID}^{\boxplus}$, and $\sigma \neq \nu \boxplus \tau_\eps^{\boxplus p}$ for any $p > 0$, so this family is not covered by the preceding corollary. Nevertheless, $F_\sigma(\mf{C}^+) = \mf{C}^+ \setminus \set{z : \abs{z} \leq 2}$, and
\[
\phi_{\tau_\eps}(z) = \frac{\sqrt{z^2 + 4 \eps^2} - z}{2}
\]
is analytic on this image for $\eps < 1$. It follows that
\[
\phi_{\tau_\eps \boxright \sigma} = \phi_{\tau_\eps} \circ F_\sigma
\]
analytically extends to $\mf{C}^+$, and so $\tau_\eps \boxright \sigma \in \mc{ID}^\boxplus$. So this family is still covered by the preceding theorem.
\end{Example}

\begin{Question}
Can the hypothesis in parts (a,b) of the Theorem be weakened to the assumption in the following proposition? In other words, does this assumption imply that the (equivalent) statements in the following proposition necessarily hold?
\end{Question}

\begin{Prop}
Let $\widetilde{\rho}\in \mc{P}$, $\tau \in \mc{P}$, and suppose that $\widetilde{\rho} \boxplus \tau^{\boxplus t}$ is defined for all $t \geq 0$. The following are equivalent.
\begin{enumerate}
\item
$\tau \boxright \widetilde{\rho} \in \mc{ID}^\boxplus$.
\item
$F_{\widetilde{\rho} \boxplus \tau^{\boxplus t}}$ is subordinate to $F_{\widetilde{\rho}}$ for all $t \geq 0$, in the sense that there exist analytic transformations $\theta_t : \mf{C}^+ \rightarrow \mf{C}^+$ such that $F_{\widetilde{\rho} \boxplus \tau^{\boxplus t}}(z) = F_{\widetilde{\rho}}(\theta_t(z))$.
\item
$\set{\Phi[\widetilde{\rho} \boxplus \tau^{\boxplus t}] : t \geq 0}$ is the first component of a two-state free convolution semigroup.
\end{enumerate}
\end{Prop}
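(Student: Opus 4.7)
The natural approach is to establish (a) $\Leftrightarrow$ (b) by direct manipulation of subordination identities using Lemma~\ref{Lemma:Subord-composition}, and to derive (a) $\Leftrightarrow$ (c) from Theorem~\ref{Thm:Two-state-free-evolution} together with Lemma~\ref{Lemma:Monotone}.

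For (a) $\Rightarrow$ (b), I will exploit the identity
\[
\phi_{\tau^{\boxplus t} \boxright \widetilde{\rho}}(z) = t\, \phi_\tau(F_{\widetilde{\rho}}(z)) = t\, \phi_{\tau \boxright \widetilde{\rho}}(z)
\]
from Lemma~\ref{Lemma:Subord-composition}. Under assumption (a) this is the Voiculescu transform of the actual probability measure $(\tau \boxright \widetilde{\rho})^{\boxplus t}$, so the desired $\theta_t$ is $F_{(\tau \boxright \widetilde{\rho})^{\boxplus t}}$, and the defining subordination relation $F_{\widetilde{\rho} \boxplus \tau^{\boxplus t}}(z) = F_{\widetilde{\rho}}(F_{\tau^{\boxplus t} \boxright \widetilde{\rho}}(z))$ immediately yields (b). Conversely, given (b), composing with $F_{\widetilde{\rho}}^{-1}$ forces $\theta_t = F_{\widetilde{\rho}}^{-1} \circ F_{\widetilde{\rho} \boxplus \tau^{\boxplus t}}$, and a direct computation using $F_{\widetilde{\rho} \boxplus \tau^{\boxplus t}}^{-1}(w) = F_{\widetilde{\rho}}^{-1}(w) + t\, \phi_\tau(w)$ yields $\theta_t^{-1}(z) - z = t\, \phi_{\tau \boxright \widetilde{\rho}}(z)$. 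Since $\theta_t$ inherits the Nevanlinna asymptotic at $\infty$ from the two $F$-transforms, it is itself the reciprocal Cauchy transform of some probability measure $\nu_t$ with $\phi_{\nu_t} = t\, \phi_{\tau \boxright \widetilde{\rho}}$; this being a genuine Voiculescu transform for every $t \geq 0$ is exactly the statement (a).

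For (a) $\Rightarrow$ (c), Theorem~\ref{Thm:Two-state-free-evolution}(a,b) applied with $p = 1$, $\omega = \tau$, and any fixed $\widetilde{\beta} \in \mf{R}$, $\widetilde{\gamma} > 0$ produces a two-state free convolution semigroup whose first-component track is $\widetilde{\mu}_t = \delta_{\widetilde{\beta} t} \uplus \Phi[\widetilde{\rho} \boxplus \tau^{\boxplus t}]^{\uplus \widetilde{\gamma} t}$, and in particular satisfies $\mc{J}[\widetilde{\mu}_t] = \widetilde{\rho} \boxplus \tau^{\boxplus t}$, realizing $\{\Phi[\widetilde{\rho} \boxplus \tau^{\boxplus t}]\}$ inside the first component via the Maassen representation~\eqref{Eq:Two-state-Maassen}. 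For (c) $\Rightarrow$ (a), I would let $(\widetilde{\mu}_t, \mu_t)$ be the semigroup guaranteed by (c), with relative canonical triple $(\widetilde{\beta}, \widetilde{\gamma}, \widetilde{\rho}_0)$. By Lemma~\ref{Lemma:Monotone}, $\mc{J}[\widetilde{\mu}_t] = \widetilde{\rho}_0 \rhd \mu_t$, which by (c) coincides with $\widetilde{\rho} \boxplus \tau^{\boxplus t}$; the $t = 0$ case identifies $\widetilde{\rho}_0 = \widetilde{\rho}$, and comparing Voiculescu transforms via a calculation parallel to~\eqref{Eq:rho-monotone} reveals $\phi_{\mu_t}(z) = t\, \phi_{\tau \boxright \widetilde{\rho}}(z)$, so the $\boxplus$-semigroup $\{\mu_t\}$ exhibits $\mu_1 = \tau \boxright \widetilde{\rho}$ as freely infinitely divisible.

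The principal obstacle is the analytic content of the (b) $\Rightarrow$ (a) step: verifying that the a priori only analytic self-map $\theta_t$ of $\mf{C}^+$ is actually the reciprocal Cauchy transform of a probability measure. Once the Nevanlinna asymptotic at $\infty$ is confirmed (inherited from $F_{\widetilde{\rho}}$ and $F_{\widetilde{\rho} \boxplus \tau^{\boxplus t}}$), every other step reduces to a rearrangement of the subordination formulae of Lemma~\ref{Lemma:Subord-composition}.
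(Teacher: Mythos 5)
Your proof is correct and takes essentially the same route as the paper: the paper's one-line justification (``calculations in the proof of the theorem show that $\theta_t = F_{(\tau \boxright \widetilde{\rho})^{\boxplus t}}$'' and ``$\mu_t = (\tau \boxright \widetilde{\rho})^{\boxplus t}$'') is precisely the identification you carry out via the computation~\eqref{Eq:rho-monotone}, the characterization of $F$-transforms by their nontangential asymptotics, and Lemma~\ref{Lemma:Monotone}. The only cosmetic point is that in (a)~$\Rightarrow$~(c) you should normalize $\widetilde{\beta}=0$ and $\widetilde{\gamma}=1$, so that the first component produced by Theorem~\ref{Thm:Two-state-free-evolution}(b) is $\Phi[\widetilde{\rho}\boxplus\tau^{\boxplus t}]^{\uplus t}$, which is the intended reading of statement (c).
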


\begin{proof}
Calculations in the proof of the theorem show that if $\theta_t$ exists, then $\theta_t = F_{(\tau \boxright \widetilde{\rho})^{\boxplus t}}$. This shows that (a $\Leftrightarrow$ b). The same calculations also imply that if $\set{(\widetilde{\mu}_t = \Phi[\widetilde{\rho} \boxplus \tau^{\boxplus t}], \mu_t) : t \geq 0}$ is a two-state free convolution semigroup, then $\mu_t = (\tau \boxright \widetilde{\rho})^{\boxplus t}$. Thus (a $\Leftrightarrow$ c).
\end{proof}

\begin{Lemma}
\label{Lemma:Basic-properties}
Subordination distributions have the following properties.
\[
(\mu \boxplus \nu) \boxright \rho = (\mu \boxright \rho) \boxplus (\nu \boxright \rho).
\]
\[
\sigma \boxright \mu = \mf{B}[\Phi[\mu]].
\]
\[
\mu \boxright \delta_0 = \mu.
\]
\[
\mu \boxright \mu = \mf{B}[\mu].
\]
\[
\delta_a \boxright \mu = \delta_a.
\]
There is a corresponding list of properties for $\Phi[\cdot, \cdot]$.
\end{Lemma}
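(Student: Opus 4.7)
The plan is to derive all five identities as immediate consequences of the composition formula $\phi_{\mu \boxright \nu}(z) = (\phi_\mu \circ F_\nu)(z)$ proved in Lemma~\ref{Lemma:Subord-composition}, since a probability measure is determined by its Voiculescu transform on a suitable truncated cone. Each identity then reduces to a one-line manipulation of Voiculescu transforms on a common domain, so there is no genuine obstacle; the only care needed is bookkeeping of the domains of composition.

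For (1), I would compute directly
\[
\phi_{(\mu \boxplus \nu) \boxright \rho}(z) = \phi_{\mu \boxplus \nu}(F_\rho(z)) = \phi_\mu(F_\rho(z)) + \phi_\nu(F_\rho(z)) = \phi_{\mu \boxright \rho}(z) + \phi_{\nu \boxright \rho}(z),
\]
which identifies $(\mu \boxplus \nu) \boxright \rho$ with $(\mu \boxright \rho) \boxplus (\nu \boxright \rho)$. For (3), $F_{\delta_0}(z) = z$ gives $\phi_{\mu \boxright \delta_0}(z) = \phi_\mu(z)$. For (5), $\phi_{\delta_a}$ is the constant $a$, so $\phi_{\delta_a \boxright \mu}(z) = a = \phi_{\delta_a}(z)$.

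For (4), using the defining relation $\phi_\mu(F_\mu(z)) + F_\mu(z) = z$,
\[
\phi_{\mu \boxright \mu}(z) = \phi_\mu(F_\mu(z)) = z - F_\mu(z) = \phi_{\mf{B}[\mu]}(z),
\]
the last equality being the definition of $\mf{B}$. For (2), the Maassen representation~\eqref{Eq:Maassen-rep} applied to the standard semicircular semigroup $\sigma^{\boxplus t} = \sigma_{0,t}$ (triple $(\beta,\gamma,\rho) = (0,1,\delta_0)$) yields $\phi_\sigma(z) = G_{\delta_0}(z) = 1/z$, so
\[
\phi_{\sigma \boxright \mu}(z) = \phi_\sigma(F_\mu(z)) = \frac{1}{F_\mu(z)} = G_\mu(z) = z - F_{\Phi[\mu]}(z) = \phi_{\mf{B}[\Phi[\mu]]}(z),
\]
where the second-to-last equality uses $F_{\Phi[\mu]}(z) = z - G_\mu(z)$ from Definition~\ref{Defn:Phi-J} and the last uses the defining relation for $\mf{B}$.

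In each case the equality of Voiculescu transforms holds on the appropriate truncated cone where both sides are defined (every operation in sight preserves some such cone), and therefore the corresponding measures coincide. The hardest-to-remember step is the identity $\phi_\sigma(z) = 1/z$ in (2); everything else is immediate from the definitions.
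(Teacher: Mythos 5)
Your proposal is correct and follows exactly the paper's approach: the paper's entire proof is the single remark that all five identities follow immediately from $\phi_{\mu \boxright \nu} = \phi_\mu \circ F_\nu$ (Lemma~\ref{Lemma:Subord-composition}), and your write-up simply carries out those one-line verifications, each of which checks out (including $\phi_\sigma(z) = 1/z$ and the identity $\phi_{\mf{B}[\Phi[\mu]]} = G_\mu$ already noted in the paper's remark on the Maassen representation).
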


\begin{proof}
All these properties follow immediately from
\[
\phi_{\mu \boxright \nu}(z) = (\phi_\mu \circ F_\nu) (z). \qedhere
\]
\end{proof}

In a number of the following examples, free convolution semigroups $\set{\mu_t : t \geq 0}$ will have finite variance, and so will be associated with canonical triples $(\beta, \gamma, \rho)$; in all cases, the relative canonical triple is $(\widetilde{\beta}, \widetilde{\gamma}, \widetilde{\rho})$.

\begin{Example}
Let $\widetilde{\rho} = \rho \in \mc{P}$. Then the first component of the corresponding two-state free convolution semigroup satisfies
\[
\mc{J}[\widetilde{\mu}_t] = \widetilde{\rho} \boxplus \sigma_{\beta, \gamma}^{\boxplus t},
\]
so that in Corollary~\ref{Cor}, $\tau = \sigma_{\beta, \gamma} \in \mc{ID}^{\boxplus}$ is a semicircular distribution. Indeed,
\[
\sigma_{\beta, \gamma} \boxright \widetilde{\rho}
= (\delta_{\beta} \boxplus \sigma^{\boxplus \gamma}) \boxright \widetilde{\rho}
= \delta_\beta \boxplus \mf{B}[\Phi[\widetilde{\rho}]]^{\boxplus \gamma}
= \mu.
\]
In the particular case when $\widetilde{\beta} = \beta$ and $\widetilde{\gamma} = \gamma$, it follows that $\widetilde{\mu}_t = \mu_t$ form a free convolution semigroup, and we are in the Belinschi-Nica setting of Proposition~\ref{Prop:Free-evolution}.
\end{Example}

\begin{Example}
Let $\widetilde{\rho} = \delta_0$ and $\rho \in \mc{P}$. Then the first component of the corresponding two-state free convolution semigroup satisfies
\[
\mc{J}[\widetilde{\mu}_t] = \mu_t,
\]
so that in Corollary~\ref{Cor}, $\tau = \mu \in \mc{ID}^{\boxplus}$ is arbitrary. Indeed,
\[
\mu \boxright \delta_0 = \mu.
\]
These are the (distributions of) two-state free Brownian motions (in \cite{Ans-Two-Brownian}, they were called algebraic two-state free Brownian motions).
\end{Example}

\begin{Example}
Let $\widetilde{\rho} \in \mc{P}$ and $\gamma = 0$, so that $\mu_t = \delta_{\beta t}$. Then the first component of the corresponding two-state free convolution semigroup satisfies
\[
\mc{J}[\widetilde{\mu}_t] = \widetilde{\rho} \boxplus \delta_{\beta t},
\]
so that in Corollary~\ref{Cor}, $\tau = \delta_\beta \in \mc{ID}^{\boxplus}$. Indeed,
\[
\delta_\beta \boxright \widetilde{\rho} = \delta_\beta = \mu.
\]
For general $\beta$ and measures all of whose moments are finite,
\[
\widetilde{\mu}_t = \delta_{\widetilde{\beta} t} \uplus \Phi[\widetilde{\rho} \boxplus \delta_{\beta t}]^{\uplus \widetilde{\gamma} t}.
\]
are precisely the families constructed in Proposition~7 of \cite{Ans-Mlot-Semigroups}. For $\beta = 0$, this is a Boolean convolution semigroup, and an arbitrary Boolean convolution semigroup (with finite variance) arises in this way.

On the other hand, if $\widetilde{\mu}_t = \delta_{\widetilde{\beta} t}$, for any free convolution semigroup $\set{\mu_t : t \geq 0}$ the measures $(\delta_{\widetilde{\beta} t}, \mu_t)$ form a two-state free convolution semigroup.
\end{Example}

\begin{Example}
\label{Example:General-b}
Let $\widetilde{\rho}, \rho \in \mc{P}$ such that $\mc{J}[\widetilde{\rho}] = \rho$. That is, for some $\widetilde{b}$ and $\widetilde{c} > 0$,
\[
\widetilde{\rho} = \delta_{\widetilde{b}} \uplus \Phi[\rho]^{\uplus \widetilde{c}}.
\]
Denote
\[
p = \widetilde{c}/\gamma, \quad u = \widetilde{b} - \beta \widetilde{c}/\gamma.
\]
Then the first component of the corresponding two-state free convolution semigroup satisfies
\[
\mc{J}[\widetilde{\mu}_t] = \widetilde{\rho} \boxplus \omega^{\boxplus p t},
\]
where in Corollary~\ref{Cor}, $\omega = \delta_{-u} \boxplus \widetilde{\rho} \in \mc{P}$ but in general is not freely infinitely divisible. Indeed,
\[
\begin{split}
\left( (\delta_{-u} \boxplus \widetilde{\rho}) \boxright \widetilde{\rho} \right)^{\boxplus (1/p)}
& = \delta_{-(u/p)} \boxplus \mf{B}[\widetilde{\rho}]^{\boxplus (1/p)}
= \mf{B}[\delta_{-(u/p)} \uplus \widetilde{\rho}^{\uplus (1/p)}] \\
& = \mf{B}[\delta_{-(u/p)} \uplus \left( \delta_{\widetilde{b}} \uplus \Phi[\rho]^{\uplus \widetilde{c}}\right) ^{\uplus (1/p)}]
= \mf{B}[\delta_{\beta} \uplus \Phi[\rho]^{\uplus \gamma}]
= \mu.
\end{split}
\]
If $\tau = \omega^{\boxplus (\gamma/\widetilde{c})} \in \mc{P}$, then
\[
\mc{J}[\widetilde{\mu}_t] = \widetilde{\rho} \boxplus \tau^{\boxplus t}.
\]
\end{Example}

\begin{Remark}
A free Meixner distribution $\mu_{b,c,\beta,\gamma}$ with parameters $b, \beta \in \mf{R}$, $c+\gamma, \gamma \geq 0$ is the probability measure with Jacobi parameters
\[
J(\mu_{b,c,\beta,\gamma}) =
\begin{pmatrix}
\beta, & b + \beta, & b + \beta, & b + \beta, & \ldots \\
\gamma, & c + \gamma, & c + \gamma, & c + \gamma, & \ldots
\end{pmatrix}.
\]
For other values of $c, \gamma$, these Jacobi parameters determine a unital, linear, but not positive definite functional. Normalized free Meixner distributions $\mu_{b,c} = \mu_{b, c, 0, 1}$ have mean zero and variance $1$, and are positive for $c \geq -1$.

Free Meixner distributions form a two-parameter semigroup with respect to $\boxplus$:
\[
\mu_{b,c,\beta,\gamma} \boxplus \mu_{b,c,\beta',\gamma'} = \mu_{b,c,\beta + \beta', \gamma + \gamma'},
\]
see Definition~2 of \cite{Ans-Mlot-Semigroups}. In particular,
\[
\mu_{b,c,\beta,\gamma}^{\boxplus t} = \mu_{b, c, \beta t, \gamma t}.
\]
Also,
\[
\mf{B}_t[\mu_{b,c,\beta,\gamma}] = \mu_{b + \beta t, c + \gamma t, \beta, \gamma}.
\]
\end{Remark}

\begin{Lemma}
The subordination distribution of two Meixner distributions with special parameters
\[
\mu_{b,c,\beta',\gamma'} \boxright \mu_{b,c,\beta,\gamma} = \mu_{b + \beta, c + \gamma, \beta', \gamma'}
\]
is again a free Meixner distribution.
\end{Lemma}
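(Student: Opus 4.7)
The plan is to match Voiculescu transforms on both sides and invoke injectivity. The workhorse is the following formula for the Voiculescu transform of a free Meixner distribution, which I would establish first:
\[
\phi_{\mu_{b,c,\beta,\gamma}}(w) = \beta + \gamma\, G_{\sigma_{b,c}}(w).
\]
Equivalently, by Lemma~\ref{Lemma:Subord-composition}, this says $\mu_{b,c,\beta,\gamma} = \sigma_{\beta, \gamma} \boxright \sigma_{b,c}$. To derive the formula, I would first read off the continued fraction for $G_{\mu_{b,c,\beta,\gamma}}$: a single coefficient stripping leaves the constant Jacobi parameters $(b+\beta, c+\gamma)$ defining the semicircle $\sigma_{b+\beta, c+\gamma}$, so
\[
F_{\mu_{b,c,\beta,\gamma}}(z) = z - \beta - \gamma G_{\sigma_{b+\beta, c+\gamma}}(z).
\]
The key auxiliary fact is the semicircle subordination identity
\[
G_{\sigma_{b+\beta, c+\gamma}}(z) = G_{\sigma_{b,c}}\bigl(z - \beta - \gamma G_{\sigma_{b+\beta, c+\gamma}}(z)\bigr) = G_{\sigma_{b,c}}(F_{\mu_{b,c,\beta,\gamma}}(z)),
\]
which I would verify by direct substitution into the quadratic $(c+\gamma) G^2 - (z - (b+\beta)) G + 1 = 0$ satisfied by $G_{\sigma_{b+\beta, c+\gamma}}$, checking that the correct branch is selected. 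Combining this with the definition $\phi_{\mu_{b,c,\beta,\gamma}}(F_{\mu_{b,c,\beta,\gamma}}(z)) = z - F_{\mu_{b,c,\beta,\gamma}}(z) = \beta + \gamma G_{\sigma_{b+\beta, c+\gamma}}(z)$ and cancelling the outer $F_{\mu_{b,c,\beta,\gamma}}$ (which is injective on $\mf{C}^+$) yields the claimed $\phi$-formula.

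With this formula in hand, the main claim reduces to three lines. By Lemma~\ref{Lemma:Subord-composition},
\[
\phi_{\mu_{b,c,\beta',\gamma'} \boxright \mu_{b,c,\beta,\gamma}}(z) = \phi_{\mu_{b,c,\beta',\gamma'}}(F_{\mu_{b,c,\beta,\gamma}}(z)) = \beta' + \gamma'\, G_{\sigma_{b,c}}(F_{\mu_{b,c,\beta,\gamma}}(z)) = \beta' + \gamma'\, G_{\sigma_{b+\beta, c+\gamma}}(z),
\]
where the second equality uses the $\phi$-formula and the third is the semicircle subordination identity above. But the final expression is precisely $\phi_{\mu_{b+\beta, c+\gamma, \beta', \gamma'}}(z)$ by the $\phi$-formula applied with the shifted parameters $(b+\beta, c+\gamma, \beta', \gamma')$. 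Injectivity of the Voiculescu transform delivers the desired equality of measures.

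There is no substantial obstacle; the entire argument rests on a single elementary identity for subordination of semicircles, which follows directly from their quadratic Cauchy equation. The only care required is with branch choices of square roots, but these are routine since all relevant maps restrict to self-maps of $\mf{C}^+$. Viewed structurally, the proof exhibits the lemma as a two-fold application of Lemma~\ref{Lemma:Subord-composition}: once to rewrite each free Meixner as a semicircle subordination with common base $\sigma_{b,c}$, and once to compose the two resulting subordinations.
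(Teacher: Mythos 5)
Your proof is correct, but it takes a genuinely different route from the paper's. The paper argues purely algebraically with the toolbox it has just assembled: it writes $\mu_{b,c,\beta',\gamma'} = \delta_{\beta' - \beta\gamma'/\gamma} \boxplus \mu_{b,c,\beta,\gamma}^{\boxplus (\gamma'/\gamma)}$ using the two-parameter $\boxplus$-semigroup property of free Meixner laws, then applies the identities $(\mu \boxplus \nu) \boxright \rho = (\mu \boxright \rho) \boxplus (\nu \boxright \rho)$, $\delta_a \boxright \nu = \delta_a$, and $\mu \boxright \mu = \mf{B}[\mu]$ from Lemma~\ref{Lemma:Basic-properties}, together with the quoted formula $\mf{B}[\mu_{b,c,\beta,\gamma}] = \mu_{b+\beta, c+\gamma, \beta, \gamma}$, to land on the answer in one chain of equalities. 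You instead compute Voiculescu transforms directly: you identify $\phi_{\mu_{b,c,\beta,\gamma}} = \beta + \gamma\, G_{\sigma_{b,c}}$ (equivalently, the canonical triple of the Meixner semigroup is $(\beta,\gamma,\sigma_{b,c})$, which is consistent with the Example following the lemma, where the paper notes $\rho = \sigma_{b,c}$), verify the semicircle subordination identity $G_{\sigma_{b+\beta,c+\gamma}} = G_{\sigma_{b,c}} \circ F_{\mu_{b,c,\beta,\gamma}}$ from the quadratic equation, and then apply $\phi_{\mu \boxright \nu} = \phi_\mu \circ F_\nu$ once. Your verification of the quadratic substitution is right: $cG^2 - (z - \beta - \gamma G - b)G + 1 = (c+\gamma)G^2 - (z - b - \beta)G + 1$, and the branch is fixed by the behavior at infinity. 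What the paper's route buys is brevity and the reuse of previously stated structural facts (at the cost of implicitly assuming $\gamma > 0$ when dividing by it, and of relying on the unproved Remark about $\mf{B}_t$); what your route buys is self-containedness and an explicit identification of the Voiculescu transform of every free Meixner law, which also makes transparent why the base measure $\sigma_{b,c}$ is preserved while only its "location" shifts to $\sigma_{b+\beta, c+\gamma}$ under subordination.
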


\begin{proof}
Using Lemma~\ref{Lemma:Basic-properties} and the properties from the preceding remark, we compute
\[
\begin{split}
\mu_{b,c,\beta',\gamma'} \boxright \mu_{b,c,\beta,\gamma}
& = \Bigl( \delta_{\beta' - \beta \gamma'/\gamma} \boxplus \mu_{b,c,\beta,\gamma}^{\boxplus (\gamma'/\gamma)} \Bigr) \boxright \mu_{b,c,\beta,\gamma}
= \delta_{\beta' - \beta \gamma'/\gamma} \boxplus \Bigl( \mu_{b,c,\beta,\gamma} \boxright \mu_{b,c,\beta,\gamma} \Bigr)^{\boxplus (\gamma'/\gamma)} \\
& = \delta_{\beta' - \beta \gamma'/\gamma} \boxplus \mf{B}[\mu_{b,c,\beta,\gamma}]^{\boxplus (\gamma'/\gamma)}
= \delta_{\beta' - \beta \gamma'/\gamma} \boxplus \mu_{b + \beta,c + \gamma,\beta,\gamma}^{\boxplus (\gamma'/\gamma)} \\
& = \delta_{\beta' - \beta \gamma'/\gamma} \boxplus \mu_{b + \beta,c + \gamma,\beta \gamma'/\gamma,\gamma'}
= \mu_{b + \beta,c + \gamma,\beta',\gamma'}.
\end{split}
\]
\end{proof}

\begin{Remark}
Since $\nu \rhd (\mu \boxright \nu) = \mu \boxplus \nu$, the preceding lemma implies a monotone convolution identity
\[
\mu_{b,c,\beta,\gamma} \rhd \mu_{b + \beta, c + \gamma, \beta', \gamma'} = \mu_{b, c, \beta + \beta', \gamma + \gamma'}.
\]
This result can also be proved directly using the $F$-transforms, but the computation is rather surprising. Since $\mu_{0, 0, \beta, \gamma}$ are semicircular, $\mu_{b, 0, \beta, \gamma}$ free Poisson, $\mu_{b, -\gamma, \beta, \gamma}$ Bernoulli, and $\mu_{0, -\gamma, 0, 2 \gamma}$ arcsine distributions, we get various identities between them involving the monotone convolution. For example,
\[
\mu_{b,c} \rhd \mu_{b, c + 1} = \mu_{b, c}^{\boxplus 2},
\]
which for $b=0$, $c=-1$ says Bernoulli $\rhd$ Semicircle $=$ Arcsine. See \cite{Mlotkowski-Fuss-Catalan} for related results.
\end{Remark}

\begin{Example}
For a particular case of Example~\ref{Example:General-b}, let $\widetilde{c} > 0$ and $c \geq 0$. The two-state free Meixner semigroups from \cite{Ans-Mlot-Semigroups} satisfy
\[
J(\widetilde{\mu}_t) =
\begin{pmatrix}
\widetilde{\beta} t, & \widetilde{b} + \beta t, & b + \beta t, & b + \beta t, & \ldots \\
\widetilde{\gamma} t, & \widetilde{c} + \gamma t, & c + \gamma t, & c + \gamma t, & \ldots
\end{pmatrix}
\]
and
\[
J(\mu_t) =
\begin{pmatrix}
\beta t, & b + \beta t, & b + \beta t, & \ldots \\
\gamma t, & c + \gamma t, & c + \gamma t, & \ldots
\end{pmatrix}.
\]
Thus
\[
J(\mc{J}[\widetilde{\mu}_t]) =
\begin{pmatrix}
\widetilde{b} + \beta t, & b + \beta t, & b + \beta t, & \ldots \\
\widetilde{c} + \gamma t, & c + \gamma t, & c + \gamma t, & \ldots
\end{pmatrix},
\]
so $\mc{J}[\widetilde{\mu}_t] = \widetilde{\rho} \boxplus \omega^{\boxplus (\gamma/\widetilde{c}) t}$, where
\[
J(\widetilde{\rho}) =
\begin{pmatrix}
\widetilde{b}, & b, & b, & b, & \ldots \\
\widetilde{c}, & c, & c, & c, & \ldots
\end{pmatrix}.
\]
and
\[
J(\omega) =
\begin{pmatrix}
\beta \widetilde{c}/\gamma, & \beta \widetilde{c}/\gamma + b - \widetilde{b}, & \beta \widetilde{c}/\gamma + b - \widetilde{b}, & \beta \widetilde{c}/\gamma + b - \widetilde{b}, & \ldots \\
\widetilde{c}, & c, & c, & c, & \ldots
\end{pmatrix}.
\]
In particular, $\widetilde{\rho} = \delta_{\widetilde{b} - \beta \widetilde{c}/\gamma} \boxplus \omega$. Note that both $\widetilde{\rho}$ and $\omega$ are free Meixner distributions. Also,
\[
J(\rho) =
\begin{pmatrix}
b, & b, & b, & b, & \ldots \\
c, & c, & c, & c, & \ldots
\end{pmatrix},
\]
so $\rho = \sigma_{b,c} = \delta_b \boxplus \sigma^{\boxplus c}$ and $\mc{J}[\widetilde{\rho}] = \rho$. Finally, for $\tau = \omega^{\boxplus (\gamma/\widetilde{c})}$,
\[
J(\tau) =
\begin{pmatrix}
\beta, & \beta + b - \widetilde{b}, & \beta + b - \widetilde{b}, & \beta + b - \widetilde{b}, & \ldots \\
\gamma, & \gamma + c - \widetilde{c}, & \gamma + c - \widetilde{c}, & \gamma + c - \widetilde{c}, & \ldots
\end{pmatrix}.
\]
So $\tau \in \mc{ID}^{\boxplus}$ for $c \geq \widetilde{c}$, $\tau \in \mc{P}$ for $\gamma + c \geq \widetilde{c}$, and for $\gamma + c <\widetilde{c}$, $\tau$ is not a positive functional.
\end{Example}

\begin{Prop}
\label{Prop:PDE}
Let $(\widetilde{\mu}_t, \mu_t)$ be a general two-state free convolution semigroup. Then we have two evolution equations
\begin{equation}
\label{Eq:PDE-2-state}
\partial_t F_{\widetilde{\mu}_t}
= \phi_\mu(F_{\mu_t}) - \phi_{\widetilde{\mu}, \mu}(F_{\mu_t}) - \phi_\mu(F_{\mu_t}) \partial_z F_{\widetilde{\mu}_t}
\end{equation}
and
\[
\partial_t F_{\mu_t}
= - \phi_\mu(F_{\mu_t}) \partial_z F_{\mu_t}.
\]
\end{Prop}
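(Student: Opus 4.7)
The plan is to derive both equations directly from the defining identities of the Voiculescu and two-state Voiculescu transforms, since the semigroup property turns them into one-parameter families with explicit $t$-dependence. For the semigroup, $\phi_{\mu_t} = t\phi_\mu$ and $\phi_{\widetilde{\mu}_t, \mu_t} = t\phi_{\widetilde{\mu}, \mu}$, so the functional equations become
\[
t\,\phi_\mu(F_{\mu_t}(z)) + F_{\mu_t}(z) = z, \qquad t\,\phi_{\widetilde{\mu}, \mu}(F_{\mu_t}(z)) + F_{\widetilde{\mu}_t}(z) = z,
\]
and the whole proof is extracting two partial derivatives from each and rearranging.

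First I would handle the second, single-variable equation (which is essentially Voiculescu's classical PDE). Differentiating the first functional equation above in $z$ gives $\partial_z F_{\mu_t} = 1/(1 + t\,\phi_\mu'(F_{\mu_t}))$, and differentiating in $t$ gives $\partial_t F_{\mu_t} = -\phi_\mu(F_{\mu_t})/(1 + t\,\phi_\mu'(F_{\mu_t}))$. Taking the ratio eliminates $\phi_\mu'$ and yields
\[
\partial_t F_{\mu_t} = -\phi_\mu(F_{\mu_t})\, \partial_z F_{\mu_t}.
\]

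Next I would apply the same procedure to the two-state equation. Differentiating in $t$, using the chain rule, and substituting the formula for $\partial_t F_{\mu_t}$ just obtained gives
\[
\partial_t F_{\widetilde{\mu}_t} = -\phi_{\widetilde{\mu},\mu}(F_{\mu_t}) + t\,\phi_{\widetilde{\mu},\mu}'(F_{\mu_t})\,\phi_\mu(F_{\mu_t})\,\partial_z F_{\mu_t}.
\]
Differentiating the two-state equation in $z$ separately gives $\partial_z F_{\widetilde{\mu}_t} = 1 - t\,\phi_{\widetilde{\mu},\mu}'(F_{\mu_t})\,\partial_z F_{\mu_t}$, so $t\,\phi_{\widetilde{\mu},\mu}'(F_{\mu_t})\,\partial_z F_{\mu_t} = 1 - \partial_z F_{\widetilde{\mu}_t}$. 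Substituting this into the previous display eliminates $\phi_{\widetilde{\mu},\mu}'$ and produces exactly \eqref{Eq:PDE-2-state}.

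No deep obstacle is expected; the argument is a chain-rule computation, and the only real ``trick'' is the standard observation that the factor $(1 + t\,\phi'(F))$ appearing in both $\partial_t$ and $\partial_z$ derivatives can be exchanged, converting a $\phi'$-dependent expression into one involving $\partial_z F$ alone. One should check that the manipulations are valid on a genuine open set where both $F_{\mu_t}$ and $F_{\widetilde{\mu}_t}$ are defined and invertible and where $\phi_\mu$, $\phi_{\widetilde{\mu},\mu}$ extend analytically; this follows from the standard domains of the (two-state) Voiculescu transforms as in \cite{BV93,Wang-Additive-c-free}, and the resulting identities then extend by analytic continuation to any common domain where both sides make sense.
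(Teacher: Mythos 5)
Your proposal is correct and follows essentially the same route as the paper: differentiate $t\,\phi_{\widetilde{\mu},\mu}(F_{\mu_t}(z)) + F_{\widetilde{\mu}_t}(z) = z$ in $t$ and in $z$, substitute the standard equation for $\partial_t F_{\mu_t}$, and use the $z$-derivative identity to replace $t\,\phi_{\widetilde{\mu},\mu}'(F_{\mu_t})\,\partial_z F_{\mu_t}$ by $1 - \partial_z F_{\widetilde{\mu}_t}$. The only cosmetic difference is that you rederive the single-state equation, which the paper simply cites from \cite{VDN}.
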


\begin{proof}
The second equation is standard, see equation~(3.18) in \cite{VDN}. Using~\eqref{Eq:Two-state-R-transform},
\[
\partial_t F_{\widetilde{\mu}_t} + \phi_{\widetilde{\mu}, \mu}(F_{\mu_t}) + t \phi_{\widetilde{\mu}, \mu}'(F_{\mu_t}) \partial_t F_{\mu_t}
= \partial_t F_{\widetilde{\mu}_t} + \phi_{\widetilde{\mu}, \mu}(F_{\mu_t}) - t \phi_{\widetilde{\mu}, \mu}'(F_{\mu_t}) \phi_\mu(F_{\mu_t}) \partial_z F_{\mu_t} = 0
\]
and
\[
\partial_z F_{\widetilde{\mu}_t} + t \phi_{\widetilde{\mu}, \mu}'(F_{\mu_t}) \partial_z F_{\mu_t} = 1.
\]
Plugging in, we get
\[
\begin{split}
\partial_t F_{\widetilde{\mu}_t}
& = - \phi_{\widetilde{\mu}, \mu}(F_{\mu_t}) + \frac{1 - \partial_z F_{\widetilde{\mu}_t}}{\phi_{\widetilde{\mu}, \mu}'(F_{\mu_t}) \partial_z F_{\mu_t}} \phi_{\widetilde{\mu}, \mu}'(F_{\mu_t}) \phi_\mu(F_{\mu_t}) \partial_z F_{\mu_t} \\
& = \phi_\mu(F_{\mu_t}) - \phi_{\widetilde{\mu}, \mu}(F_{\mu_t}) - \phi_\mu(F_{\mu_t}) \partial_z F_{\widetilde{\mu}_t}. \qedhere
\end{split}
\]
\end{proof}

\begin{Defn}
The functional $L_t$ is the \emph{generator} of the family $\set{\mu_t : t \geq 0}$ of functionals at time $t$, with domain $\mc{D}$, if for any $f \in \mc{D}$
\[
\ip{L_t}{f} = \frac{d}{dt}\ip{\mu_t}{f}.
\]
\end{Defn}

\begin{Prop}
Let $(\widetilde{\mu}_t, \mu_t)$ be a general two-state free convolution semigroup with finite variance, with canonical triples $\set{(\widetilde{\beta}, \widetilde{\gamma}, \widetilde{\rho}), (\beta, \gamma, \rho)}$. Denote $\mc{J}[\widetilde{\mu}_t] = \widetilde{\nu}_t$, $\mc{J}[\mu_t] = \nu_t$. Note that
\[
\nu_t = \rho \boxplus \sigma_{\beta, \gamma}^{\boxplus t},
\]
and for measures covered in Theorem~\ref{Thm:Two-state-free-evolution}, $\widetilde{\nu}_t = \widetilde{\rho} \boxplus \tau^{\boxplus t}$.

Then the generators of the families $\set{\widetilde{\mu}_t}$ and $\set{\mu_t}$ with domain
\[
\mc{D} = \Span{\set{\frac{1}{z - x} : z \in \mf{C} \setminus \mf{R}}}
\]
are, respectively,
\[
\begin{split}
\widetilde{L}_t
& = \widetilde{\gamma} (\widetilde{\mu}_t \otimes \widetilde{\mu}_t \otimes \widetilde{\nu}_t) \partial^2
- \gamma (\widetilde{\mu}_t \otimes \widetilde{\mu}_t \otimes \nu_t) \partial^2 \\
&\quad + (\widetilde{\beta} - \beta) (\widetilde{\mu}_t \otimes \widetilde{\mu}_t) \partial
+ \gamma (\widetilde{\mu}_t \otimes \nu_t) (\partial_x \otimes 1) \partial + \beta \widetilde{\mu}_t \partial_x
\end{split}
\]
and
\[
L_t = \gamma (\mu_t \otimes \nu_t) (\partial_x \otimes 1) \partial + \beta \mu_t \partial_x.
\]
Here $\partial: \mc{D} \rightarrow \mc{D} \otimes \mc{D}$ is the difference quotient operation,
\[
(\partial f)(x,y) = \frac{f(x) - f(y)}{x - y}.
\]
\end{Prop}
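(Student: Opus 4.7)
The plan is to verify both generator formulas on the spanning set $\set{f_z(x) = (z-x)^{-1} : z \in \mf{C} \setminus \mf{R}}$ of $\mc{D}$. Since $\ip{\mu_t}{f_z} = G_{\mu_t}(z)$ and $\ip{\widetilde{\mu}_t}{f_z} = G_{\widetilde{\mu}_t}(z)$, it suffices to check the identities $\partial_t G_{\mu_t}(z) = \ip{L_t}{f_z}$ and $\partial_t G_{\widetilde{\mu}_t}(z) = \ip{\widetilde{L}_t}{f_z}$ as analytic functions of $z$, after which the general case follows by linearity.

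On the time-derivative side, I would start from the two PDEs in Proposition~\ref{Prop:PDE} and translate them to $G = 1/F$ via $\partial_\bullet F = -\partial_\bullet G / G^2$. The second PDE becomes $\partial_t G_{\mu_t} = -\phi_\mu(F_{\mu_t})\,\partial_z G_{\mu_t}$, and the first rearranges to $\partial_t G_{\widetilde{\mu}_t} = G_{\widetilde{\mu}_t}^2 \bigl[\phi_{\widetilde{\mu}, \mu}(F_{\mu_t}) - \phi_\mu(F_{\mu_t})\bigr] - \phi_\mu(F_{\mu_t})\, \partial_z G_{\widetilde{\mu}_t}$. To eliminate the nonlinear terms $\phi_\mu \circ F_{\mu_t}$ and $\phi_{\widetilde{\mu}, \mu} \circ F_{\mu_t}$, I would combine the Maassen representations $\phi_\mu(w) = \beta + \gamma G_\rho(w)$ and $\phi_{\widetilde{\mu}, \mu}(w) = \widetilde{\beta} + \widetilde{\gamma} G_{\widetilde{\rho}}(w)$ with two subordination identities. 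The first, $G_\rho(F_{\mu_t}(z)) = G_{\nu_t}(z)$, falls out of the definition $F_{\mu_t}(z) = z - \beta t - \gamma t G_{\nu_t}(z)$ of $\mc{J}[\mu_t] = \nu_t$ compared against the Voiculescu relation for $\mu_t$. The second, $G_{\widetilde{\rho}}(F_{\mu_t}(z)) = G_{\widetilde{\nu}_t}(z)$, is a direct restatement of Lemma~\ref{Lemma:Monotone}, which identifies $\widetilde{\nu}_t = \widetilde{\rho} \rhd \mu_t$. Substituting yields the closed forms $\phi_\mu(F_{\mu_t}) = \beta + \gamma G_{\nu_t}$ and $\phi_{\widetilde{\mu}, \mu}(F_{\mu_t}) = \widetilde{\beta} + \widetilde{\gamma} G_{\widetilde{\nu}_t}$, so the PDE side is fully expressed in terms of $G_{\mu_t}, G_{\widetilde{\mu}_t}, G_{\nu_t}, G_{\widetilde{\nu}_t}$ and their $z$-derivatives.

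On the generator side, I would evaluate each term of $L_t$ and $\widetilde{L}_t$ on $f_z$ using the elementary identities
\[
(\partial f_z)(x,y) = \frac{1}{(z-x)(z-y)}, \qquad (\partial^2 f_z)(x,y,w) = \frac{1}{(z-x)(z-y)(z-w)}, \qquad (\partial_x f_z)(x) = -\partial_z f_z(x).
\]
These immediately convert every tensor-product expression into a product of Cauchy transforms or their $z$-derivatives; for instance $(\mu_t \otimes \nu_t)(\partial_x \otimes 1)\partial f_z = -G_{\nu_t}(z)\,\partial_z G_{\mu_t}(z)$ and $(\widetilde{\mu}_t \otimes \widetilde{\mu}_t \otimes \widetilde{\nu}_t)\partial^2 f_z = G_{\widetilde{\mu}_t}(z)^2 G_{\widetilde{\nu}_t}(z)$. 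Assembling the pieces reproduces exactly the expressions for $\partial_t G_{\mu_t}$ and $\partial_t G_{\widetilde{\mu}_t}$ derived above.

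No substantive obstacle arises---the argument is a direct bookkeeping computation once Proposition~\ref{Prop:PDE} is in hand. The essential structural ingredient is Lemma~\ref{Lemma:Monotone}, which is what permits $\phi_{\widetilde{\mu}, \mu} \circ F_{\mu_t}$ to collapse into a linear expression in the Cauchy transform of the stripped measure $\widetilde{\nu}_t = \mc{J}[\widetilde{\mu}_t]$, mirroring the familiar one-state collapse $\phi_\mu \circ F_{\mu_t} = \beta + \gamma G_{\nu_t}$.
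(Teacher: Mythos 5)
Your proposal is correct and follows essentially the same route as the paper: plug the Maassen-type collapses $\phi_\mu \circ F_{\mu_t} = \beta + \gamma G_{\nu_t}$ and $\phi_{\widetilde{\mu}, \mu} \circ F_{\mu_t} = \widetilde{\beta} + \widetilde{\gamma} G_{\widetilde{\nu}_t}$ into the evolution equations of Proposition~\ref{Prop:PDE}, rewrite in terms of Cauchy transforms, and match against the generator terms evaluated on resolvents. The only cosmetic differences are that the paper obtains the collapse identity directly from $\frac{1}{t}(z - F_{\widetilde{\mu}_t}(z))$ and the definition of $\mc{J}$ rather than via Lemma~\ref{Lemma:Monotone}, and deduces $L_t$ by specializing $\widetilde{\mu}_t = \mu_t$ rather than from the second PDE.
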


\begin{proof}
Note first that
\[
(\phi_{\widetilde{\mu}, \mu} \circ F_{\mu_t})(z)
= \frac{1}{t} (\phi_{\widetilde{\mu}_t, \mu_t} \circ F_{\mu_t})(z)
= \frac{1}{t} (z - F_{\widetilde{\mu}_t}(z))
= \widetilde{\beta} + \widetilde{\gamma} G_{\widetilde{\nu}_t}(z)
\]
and similarly $\phi_{\mu} \circ F_{\mu_t} = \beta + \gamma G_{\nu_t}$. Therefore in this case, equation~\eqref{Eq:PDE-2-state} says
\[
\partial_t F_{\widetilde{\mu}_t}
= \beta + \gamma G_{\nu_t} - \widetilde{\beta} + \widetilde{\gamma} G_{\widetilde{\nu}_t} - (\beta + \gamma G_{\nu_t}) \partial_z F_{\widetilde{\mu}_t}.
\]
Equivalently,
\[
\partial_t G_{\widetilde{\mu}_t}
= - (\beta + \gamma G_{\nu_t} - \widetilde{\beta} + \widetilde{\gamma} G_{\widetilde{\nu}_t}) G_{\widetilde{\mu}_t}^2 - (\beta + \gamma G_{\nu_t}) \partial_z G_{\widetilde{\mu}_t}.
\]
In other words,
\[
\begin{split}
\partial_t \ip{\widetilde{\mu}_t}{\frac{1}{z - x}}
& = - \gamma \ip{\widetilde{\mu}_t \otimes \widetilde{\mu}_t \otimes \nu_t}{\partial^2 \frac{1}{z - x}}
- \beta \ip{\widetilde{\mu}_t \otimes \widetilde{\mu}_t}{\partial \frac{1}{z - x}} \\
&\quad + \widetilde{\gamma} \ip{\widetilde{\mu}_t \otimes \widetilde{\mu}_t \otimes \widetilde{\nu}_t}{\partial^2 \frac{1}{z - x}}
+ \widetilde{\beta} \ip{\widetilde{\mu}_t \otimes \widetilde{\mu}_t}{\partial \frac{1}{z - x}} \\
&\quad + \gamma \ip{\widetilde{\mu}_t \otimes \nu_t}{(\partial_x \otimes 1) \partial \frac{1}{z - x}}
+ \beta \ip{\widetilde{\mu}_t}{\partial_x \frac{1}{z - x}}
\end{split}
\]
The formula for the generator $\widetilde{L}_t$ of $\set{\widetilde{\mu}_t}$ on the span of such functions follows. The formula for $L_t$ follows by setting $\widetilde{\mu}_t = \mu_t$.
\end{proof}

\begin{Remark}
Setting $t=0$ in the preceding proposition, $\mu_0 = \widetilde{\mu}_0 = \delta_0$, $\widetilde{\nu}_0 = \widetilde{\rho}$, and $\nu_0 = \rho$. Thus
\[
L_0 f = \gamma \ip{\delta_0 \otimes \rho}{(\partial_x \otimes 1) \partial f}
+ \beta \ip{\delta_0}{\partial_x f}
= \gamma \int_{\mf{R}} \frac{f(y) - f(0) - y f'(0)}{y^2} \,d\rho(y) + \beta f'(0)
\]
and
\[
\begin{split}
\widetilde{L}_0 f
& = \widetilde{\gamma} \ip{\delta_0 \otimes \delta_0 \otimes \widetilde{\rho}}{\partial^2 f}
- \gamma \ip{\delta_0 \otimes \delta_0 \otimes \rho}{\partial^2 f}
+ (\widetilde{\beta} - \beta) \ip{\delta_0 \otimes \delta_0}{\partial f} \\
&\quad + \gamma \ip{\delta_0 \otimes \rho}{(\partial_x \otimes 1) \partial f}
+ \beta \ip{\delta_0}{\partial_x f} \\
& = \int_{\mf{R}} \frac{f(y) - f(0) - y f'(0)}{y^2} \,d(\widetilde{\gamma} \widetilde{\rho} - \gamma \rho)(y) + (\widetilde{\beta} - \beta) f'(0) \\
&\quad + \int_{\mf{R}} \frac{f(y) - f(0) - y f'(0)}{y^2} \,d(\gamma \rho)(y) + \beta f'(0) \\
& = \widetilde{\gamma} \int_{\mf{R}} \frac{f(y) - f(0) - y f'(0)}{y^2} \,d\widetilde{\rho}(y) + \widetilde{\beta} f'(0)
\end{split}
\]
has exactly the same form as in Proposition~3 of \cite{Ans-Generator}; see also Remark~11 of that paper.
\end{Remark}

\begin{Remark}
Boolean evolution corresponds to $\beta = \gamma = 0$, $\mu_t = \delta_0$. Then
\[
\partial_t F_{\widetilde{\mu}_t}(z) = - \phi_{\widetilde{\mu}, \delta_0}(z).
\]
In fact, since $\phi_{\widetilde{\mu}, \delta_0}(z) = z - F_{\widetilde{\mu}}(z)$, this is easy to see directly. It follows that in this case,
\[
\widetilde{L}_t
= \widetilde{\gamma} (\widetilde{\mu}_t \otimes \widetilde{\mu}_t \otimes \widetilde{\nu}_t) \partial^2
+ \widetilde{\beta} (\widetilde{\mu}_t \otimes \widetilde{\mu}_t) \partial.
\]
For $t=0$, we again get the formula from the preceding remark.

Similarly, distributions of analytic two-state free Brownian motions correspond to $\widetilde{\gamma} = \gamma = 1$, $\widetilde{\beta} = 0$, $\widetilde{\mu} = \Phi[\mu]$ and $\mu = \delta_\beta \boxplus \sigma$, so that $\widetilde{\nu}_t = \nu_t = \mu_t$. Then the generator formula reduces to
\[
\begin{split}
\widetilde{L}_t
& = - \beta (\widetilde{\mu}_t \otimes \widetilde{\mu}_t) \partial
+ (\widetilde{\mu}_t \otimes \mu_t) (\partial_x \otimes 1) \partial + \beta \widetilde{\mu}_t \partial_x \\
& = \widetilde{\mu}_t \Bigl( - \beta (1 \otimes \widetilde{\mu}_t) \partial
+ \partial_x (1 \otimes \mu_t) \partial + \beta \partial_x \Bigr),
\end{split}
\]
consistent with the result of Proposition~24 in \cite{Ans-Generator}.
\end{Remark}

\section{Background II}

\subsection{Multivariate polynomials}

The number $d \in \mf{N}$ will be fixed throughout the remainder of the article. Denote
\[
\mb{x} = (x_1, x_2, \ldots, x_d)
\]
a $d$-tuple of variables, and similarly for $\mb{z}$, etc. Let
\[
\mf{C} \langle \mb{x} \rangle = \mf{C} \langle x_1, x_2, \ldots, x_d \rangle
\]
be the algebra of polynomials in $d$ non-commuting variables. For $k \geq 1$ and
\[
\vec{u} = (u(1), u(2), \ldots, u(k)) \in \set{1, \ldots, d}^k
\]
a multi-index, denote
\[
x_{\vec{u}} = x_{u(1)} x_{u(2)} \ldots x_{u(k)}.
\]
Denote
\[
\Dist(d) = \set{\mu : \mf{C} \langle x_1, x_2, \ldots, x_d \rangle \rightarrow \mf{C} \text{ unital, linear functionals}},
\]
For $\beta \in \mf{R}^d$, the element $\delta_{\beta} \in \Dist(d)$ is
\[
\delta_\beta[x_{\vec{u}}] = \beta_{\vec{u}}.
\]

\subsection{Free, Boolean, and two-state free convolutions}

Let $\mu \in \Dist(d)$. Denote its moment generating function by
\[
M^\mu(\mb{z}) = \sum_{\vec{u}} \mu[x_{\vec{u}}] z_{\vec{u}}.
\]
The (combinatorial) $R$-transform $R^\mu$ of $\mu$ is determined by
\begin{equation}
\label{Eq:M-R}
R^\mu \Bigl( z_1 (1 + M^\mu(\mb{z})), \ldots, z_d (1 + M^\mu(\mb{z})) \Bigr) = M^\mu(\mb{z}),
\end{equation}
see Lecture~16 of \cite{Nica-Speicher-book}. The free convolution of two functionals $\mu \boxplus \nu$ is determined by the equality
\[
R^{\mu \boxplus \nu} = R^\mu + R^\nu.
\]
In the algebraic setting, any functional is $\boxplus$-infinitely divisible.

Similarly, the $\eta$-transform $\eta^\mu$ is
\[
\eta^\mu(\mb{z}) = (1 + M^\mu(\mb{z}))^{-1} M^\mu(\mb{z})
\]
(for a multivariate power series $F$, $F^{-1}$ will denote its multiplicative inverse). The Boolean convolution of two functionals $\mu \uplus \nu$ is determined by the equality
\[
\eta^{\mu \uplus \nu} = \eta^\mu + \eta^\nu.
\]
Finally, for $\widetilde{\mu}, \mu \in \Dist(d)$, the two-state $R$-transform $R^{\widetilde{\mu}, \mu}$ is determined by
\[
\eta^{\widetilde{\mu}}(\mb{z}) = R^{\widetilde{\mu}, \mu} \Bigl( z_1 (1 + M^\mu(\mb{z})), \ldots, z_d (1 + M^{\mu}(\mb{z})) \Bigr) (1 + M^\mu(\mb{z}))^{-1},
\]
and the two-state free convolution of two pairs of functionals
\[
(\rho, \mu \boxplus \nu) = (\widetilde{\mu},\mu) \boxplus_c (\widetilde{\nu}, \nu)
\]
is determined by the equality
\[
R^{\rho, \mu \boxplus \nu} = R^{\widetilde{\mu},\mu} + R^{\widetilde{\nu}, \nu}.
\]
See Section~2.5 of \cite{AnsEvolution}.

If $d=1$ and $\mu$ is a compactly supported probability measure on $\mf{R}$, it can be identified with an element of $\Dist(1)$. In this case, the complex function transforms from Section~\ref{Section:Preliminaries} have power series expansions related to the power series from this section by
\[
1 + M^\mu(z) = \frac{1}{z} G_\mu \left( \frac{1}{z} \right), \quad
R^\mu(z) = z R_\mu(z) = z \phi_\mu \left( \frac{1}{z} \right),
\]
\[
\eta^\mu(z) = \frac{1}{z} - F_\mu \left( \frac{1}{z} \right), \quad
R^{\widetilde{\mu}, \mu}(z) = z \phi_{\widetilde{\mu}, \mu} \left( \frac{1}{z} \right).
\]

\subsection{Transformations}

For $\nu \in \Dist(d)$, the functional $\Phi[\nu]$ is determined by
\[
\eta^{\Phi[\nu]}(\mb{z}) = \sum_{i=1}^d z_i (1 + M^\nu(\mb{z})) z_i.
\]
See \cite{Belinschi-Nica-Free-BM} and \cite{AnsAppell3}.

In the algebraic setting, $\mf{B}$ is a bijection from $\Dist(d)$ to itself determined by
\[
R^{\mf{B}[\mu]} = \eta^{\mu}.
\]

Finally, for $\mu, \nu \in \Dist(d)$, the multivariate subordination distribution $\mu \boxright \nu \in \Dist(d)$ is defined via
\begin{equation}
\label{Eq:Multivariate-subord}
R^{\mu \boxright \nu} (\mb{z})
= R^{\mu} \Bigl( z_1 \left(1 + M^{\nu}(\mb{z}) \right), \ldots, z_d \left(1 + M^{\nu}(\mb{z}) \right) \Bigr) \left(1 + M^\nu(\mb{z}) \right)^{-1}.
\end{equation}
See Definition~1.1 in \cite{Nica-Subordination}.

\section{Multi-variate, algebraic results}
\label{Section:Algebraic}

The following proposition is the analog of the single-variable relation $G_{\mu \boxplus \nu}(z) = G_\nu(F_{\mu \boxright \nu}(z))$.

\begin{Prop}
\label{Prop:Multivariate-composition}
The subordination distribution $\mu \boxright \nu$ satisfies
\[
1 + M^{\mu \boxplus \nu} (\mb{z})
= \left(1 + M^{\mu \boxright \nu}(\mb{z}) \right) \left(1 + M^{\nu} \left(z_1 (1 + M^{\mu \boxright \nu}(\mb{z})), \ldots, z_d (1 + M^{\mu \boxright \nu}(\mb{z})) \right) \right).
\]
Consequently, for a fixed $\nu$, the map $\mu \mapsto \mu \boxright \nu$ is a bijection on $\Dist(d)$.
\end{Prop}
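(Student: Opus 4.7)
The plan is to argue directly from the definitions of $R^\mu$, $R^\nu$, and $R^{\mu \boxright \nu}$ as formal power series, using the characterizing relation \eqref{Eq:M-R} between $M$ and $R$, together with the fact that $R^{\mu \boxplus \nu} = R^\mu + R^\nu$. The idea is to define
\[
H(\mb{z}) := \bigl(1 + M^{\mu \boxright \nu}(\mb{z})\bigr)\bigl(1 + M^{\nu}(v_1,\ldots,v_d)\bigr), \qquad v_i = z_i\bigl(1 + M^{\mu \boxright \nu}(\mb{z})\bigr),
\]
and show that $R^{\mu \boxplus \nu}(z_1 H, \ldots, z_d H) = H - 1$. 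By the uniqueness of the fixed-point relation \eqref{Eq:M-R}, this will force $H = 1 + M^{\mu \boxplus \nu}$, which is precisely the claimed identity.

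The key step is unpacking the defining formula \eqref{Eq:Multivariate-subord} after substituting $\mb{z} \mapsto \mb{v}$. On the one hand, \eqref{Eq:M-R} applied to $\rho := \mu \boxright \nu$ gives $R^{\rho}(\mb v) = M^{\rho}(\mb z)$. On the other hand, substituting $\mb v$ into \eqref{Eq:Multivariate-subord} yields
\[
R^{\mu}\bigl(v_1(1 + M^{\nu}(\mb v)), \ldots, v_d(1 + M^{\nu}(\mb v))\bigr) = M^{\rho}(\mb z)\bigl(1 + M^{\nu}(\mb v)\bigr).
\]
Since $v_i(1+M^\nu(\mb v)) = z_i H$, this computes $R^{\mu}(z_1 H, \ldots, z_d H)$ explicitly. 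Similarly, applying \eqref{Eq:M-R} to $\nu$ with argument $\mb v$ gives $R^{\nu}(z_1 H, \ldots, z_d H) = M^{\nu}(\mb v)$. Adding these two identities, the sum telescopes:
\[
M^{\rho}(\mb z)\bigl(1 + M^{\nu}(\mb v)\bigr) + M^{\nu}(\mb v) = \bigl(1 + M^{\rho}(\mb z)\bigr)\bigl(1 + M^{\nu}(\mb v)\bigr) - 1 = H - 1,
\]
which is exactly $R^{\mu \boxplus \nu}(z_1 H, \ldots, z_d H) = H - 1$, as required.

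For the bijectivity statement, I would observe that, with $\nu$ fixed, the defining relation \eqref{Eq:Multivariate-subord} expresses $R^{\mu \boxright \nu}$ as the composition of $R^{\mu}$ with the formal power series substitution $z_i \mapsto z_i(1 + M^{\nu}(\mb{z}))$, followed by division by $1 + M^{\nu}(\mb{z})$. Since $M^{\nu}(\mb{0}) = 0$, this substitution is an invertible change of variables in the ring of formal power series without constant term, and division by the unit $1 + M^{\nu}(\mb{z})$ is also invertible. Hence the passage $R^{\mu} \leftrightarrow R^{\mu \boxright \nu}$ is a bijection, and since $\mu$ is determined by $R^{\mu}$ (and conversely), the map $\mu \mapsto \mu \boxright \nu$ is a bijection on $\Dist(d)$.

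The main obstacle is simply the bookkeeping of the three levels of substitutions ($\mb z$, $\mb v$, $\mb z H$) and keeping track of which instance of \eqref{Eq:M-R} is being invoked in which variables. No deeper difficulty is anticipated, since once the identity is recast as the fixed-point relation for $H$, the computation is a short algebraic manipulation.
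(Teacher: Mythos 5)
Your proof of the displayed identity is correct and is, at its core, the same computation as the paper's: both hinge on the observation that under the substitution $v_i = z_i\bigl(1+M^{\mu\boxright\nu}(\mb{z})\bigr)$ one has $v_i\bigl(1+M^\nu(\mb{v})\bigr) = z_i H$, together with \eqref{Eq:M-R} applied to $\mu\boxright\nu$ and to $\nu$ and the additivity $R^{\mu\boxplus\nu}=R^\mu+R^\nu$. The only organizational difference is the direction of the logic: the paper first shows the product equation has a unique solution $\lambda$ and then identifies $\lambda$ with $\mu\boxright\nu$ by comparing with \eqref{Eq:Multivariate-subord}, whereas you plug $\mu\boxright\nu$ in directly and verify that $H-1$ satisfies the fixed-point relation \eqref{Eq:M-R} characterizing $M^{\mu\boxplus\nu}$; both uniqueness statements are justified by the same degree-by-degree recursion. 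Where you genuinely diverge is the bijectivity claim: the paper deduces it from the product formula itself (given $\nu$ and $\lambda$ the formula determines $\mu\boxplus\nu$ and hence $\mu$; conversely the uniqueness of $\lambda$ gives the other direction), while you read it off directly from \eqref{Eq:Multivariate-subord} as an invertible substitution composed with multiplication by the unit $1+M^\nu(\mb{z})$. Your route is self-contained and does not even need the product formula; the paper's has the small advantage of exhibiting the inverse map concretely in terms of moments. One minor point worth making explicit in your version: in this non-commutative setting the substitution $z_i\mapsto z_i\bigl(1+M^\nu(\mb{z})\bigr)$ is invertible because it is unitriangular with respect to total degree, so a series can be recovered from its image degree by degree -- the same recursion the paper uses for the uniqueness of $\lambda$.
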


\begin{proof}
Note first that the equation
\[
1 + M^{\mu \boxplus \nu} (\mb{z})
= \Bigl(1 + M^{\lambda}(\mb{z}) \Bigr) \Bigl(1 + M^{\nu} \Bigl(z_1 (1 + M^{\lambda}(\mb{z})), \ldots, z_d (1 + M^{\lambda}(\mb{z})) \Bigr) \Bigr)
\]
has a unique solution $\lambda$. Indeed,
\[
(\mu \boxplus \nu)[x_{\vec{u}}] = \lambda[x_{\vec{u}}] + \nu[x_{\vec{u}}] + P_{\vec{u}}\left(\lambda[x_{\vec{v}}], \nu[x_{\vec{v}}] : \abs{\vec{v}} < \abs{\vec{u}} \right)
\]
for some polynomial $P_{\vec{u}}$.

Denote $w_i = z_i \left(1 + M^{\lambda}(\mb{z}) \right)$. Then
\[
\begin{split}
M^{\mu \boxplus \nu}(\mb{z})
& = R^{\mu \boxplus \nu} \Bigl( z_1 \left( 1 + M^{\mu \boxplus \nu}(\mb{z}) \right), \ldots, z_d \left( 1 + M^{\mu \boxplus \nu}(\mb{z}) \right) \Bigr) \\
& = R^{\mu \boxplus \nu} \Bigl( w_1 \left(1 + M^{\nu}(\mb{w}) \right), \ldots, w_d \left(1 + M^{\nu}(\mb{w}) \right) \Bigr) \\
& = R^{\mu} \Bigl( w_1 \left(1 + M^{\nu}(\mb{w}) \right), \ldots, w_d \left(1 + M^{\nu}(\mb{w}) \right) \Bigr) + M^\nu (\mb{w}).
\end{split}
\]
On the other hand,
\[
\begin{split}
M^{\mu \boxplus \nu} (\mb{z})
& = \left( 1 + M^\lambda(\mb{z}) \right) \left(1 + M^\nu(\mb{w}) \right) - 1 \\
& = M^\lambda(\mb{z}) \left(1 + M^\nu(\mb{w}) \right) + M^\nu(\mb{w}) \\
& = R^\lambda (\mb{w}) \left(1 + M^\nu(\mb{w}) \right) + M^\nu(\mb{w}).
\end{split}
\]
Combining these two equations,
\[
R^{\mu} \Bigl( w_1 \left(1 + M^{\nu}(\mb{w}) \right), \ldots, w_d \left(1 + M^{\nu}(\mb{w}) \right) \Bigr)
= R^\lambda (\mb{w}) \left(1 + M^\nu(\mb{w}) \right).
\]
Comparing with equation~\eqref{Eq:Multivariate-subord}, we see that $\lambda = \mu \boxright \nu$.

The equation in the proposition shows that given $\nu$ and $\lambda$, $\mu \boxplus \nu$, and consequently $\mu$, is uniquely determined. Conversely, the uniqueness statement above shows that given $\nu$ and $\mu$, $\mu \boxright \nu$ is uniquely determined.
\end{proof}

In the multi-variate, algebraic setting, all the results in Lemma~\ref{Lemma:Basic-properties} were proved in \cite{Nica-Subordination}, see Remark~1.2, Theorem~1.8, equation~(5.7), and Proposition~5.3. We will use them without proof.

\begin{Prop}
Let $\widetilde{\beta} \in \mf{R}^d$, $\widetilde{\gamma} > 0$, $\widetilde{\rho} \in \Dist(d)$, and $\set{\mu_t: t \geq 0} \subset \Dist(d)$ a free convolution semigroup. Define a two-state free convolution semigroup $\set{(\widetilde{\mu}_t, \mu_t) : t \geq 0}$ by
\[
R^{\widetilde{\mu}_t, \mu_t}(\mb{z}) = t \widetilde{\beta} \cdot \mb{z} + t \widetilde{\gamma} \sum_{i=1}^d z_i \left(1 + M^{\widetilde{\rho}}(\mb{z}) \right) z_i
\]
Define $\tau \in \Dist(d)$ via
\[
\mu = \mu_1 = \tau \boxright \widetilde{\rho}.
\]
Then
\[
\widetilde{\mu}_t = \delta_{t \widetilde{\beta}} \uplus \Phi[\widetilde{\rho} \boxplus \tau^{\boxplus t}]^{\uplus \widetilde{\gamma} t}.
\]
\end{Prop}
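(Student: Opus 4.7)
The plan is to compute $\eta^{\widetilde{\mu}_t}$ directly from the definition of the two-state $R$-transform and match it against the $\eta$-transform of the right-hand side. Since $\eta^{\delta_{t\widetilde{\beta}}}(\mb{z}) = t\widetilde{\beta} \cdot \mb{z}$ and $\eta^{\Phi[\nu]}(\mb{z}) = \sum_i z_i (1 + M^\nu(\mb{z})) z_i$, what we need to establish is
\[
\eta^{\widetilde{\mu}_t}(\mb{z}) \;=\; t\widetilde{\beta}\cdot\mb{z} + t\widetilde{\gamma}\sum_{i=1}^d z_i\bigl(1 + M^{\widetilde{\rho}\boxplus \tau^{\boxplus t}}(\mb{z})\bigr) z_i.
\]

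First I will unwind the definition of $R^{\widetilde{\mu}_t, \mu_t}$: setting $w_i = z_i(1+M^{\mu_t}(\mb{z}))$, the formula for $\eta^{\widetilde{\mu}}$ in terms of the two-state $R$-transform becomes
\[
\eta^{\widetilde{\mu}_t}(\mb{z}) = \Bigl(t\widetilde{\beta}\cdot\mb{w} + t\widetilde{\gamma}\sum_i w_i\bigl(1+M^{\widetilde{\rho}}(\mb{w})\bigr)w_i\Bigr)\bigl(1+M^{\mu_t}(\mb{z})\bigr)^{-1}.
\]
The linear piece immediately collapses: $t\widetilde{\beta}\cdot\mb{w} \cdot (1+M^{\mu_t}(\mb{z}))^{-1} = t\widetilde{\beta}\cdot\mb{z}$. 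In the quadratic piece, one factor of $w_i$ on each side of $(1+M^{\widetilde{\rho}}(\mb{w}))$ can be unpacked, leaving
\[
t\widetilde{\gamma}\sum_{i=1}^d z_i\bigl(1 + M^{\mu_t}(\mb{z})\bigr)\bigl(1 + M^{\widetilde{\rho}}(\mb{w})\bigr) z_i.
\]
So it remains to identify the product $(1+M^{\mu_t}(\mb{z}))(1+M^{\widetilde{\rho}}(\mb{w}))$ with $1 + M^{\widetilde{\rho}\boxplus \tau^{\boxplus t}}(\mb{z})$.

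This is exactly the content of Proposition~\ref{Prop:Multivariate-composition} applied to the pair $(\tau^{\boxplus t},\widetilde{\rho})$, provided I can show $\tau^{\boxplus t}\boxright\widetilde{\rho} = \mu_t$. To verify this, I will work directly on the level of $R$-transforms using the defining formula~\eqref{Eq:Multivariate-subord} of the subordination distribution together with the $t$-homogeneity $R^{\tau^{\boxplus t}} = tR^\tau$: a one-line calculation gives $R^{\tau^{\boxplus t}\boxright\widetilde{\rho}} = tR^{\tau\boxright\widetilde{\rho}} = tR^\mu = R^{\mu_t}$. (Equivalently, this is the multivariate version of the identity $(\mu\boxplus\nu)\boxright\rho = (\mu\boxright\rho)\boxplus(\nu\boxright\rho)$ from Lemma~\ref{Lemma:Basic-properties}, extended from integer to real exponents.) Plugging Proposition~\ref{Prop:Multivariate-composition} back in produces exactly the desired expression for $\eta^{\widetilde{\mu}_t}$.

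There is no real obstacle beyond careful non-commutative bookkeeping; the one subtlety is the identification $\tau^{\boxplus t}\boxright\widetilde{\rho} = \mu_t$ for arbitrary real $t\geq 0$, which is handled most cleanly by working with $R$-transforms rather than by iterating the Boolean-type distributivity property of subordination. The existence of $\tau$ itself is not an issue because Proposition~\ref{Prop:Multivariate-composition} asserts that $\mu\mapsto\mu\boxright\widetilde{\rho}$ is a bijection on $\Dist(d)$, so the equation $\tau\boxright\widetilde{\rho} = \mu$ has a unique solution.
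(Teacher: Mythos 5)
Your proof is correct and follows essentially the same route as the paper: compute $\eta^{\widetilde{\mu}_t}$ from the definition of the two-state $R$-transform, identify $\tau^{\boxplus t}\boxright\widetilde{\rho}=\mu_t$, and invoke Proposition~\ref{Prop:Multivariate-composition} to recognize the product $(1+M^{\mu_t})(1+M^{\widetilde{\rho}}(\mb{w}))$ as $1+M^{\widetilde{\rho}\boxplus\tau^{\boxplus t}}$. Your explicit $R$-transform verification of $\tau^{\boxplus t}\boxright\widetilde{\rho}=\mu_t$ is a welcome detail that the paper's proof states without justification.
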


\begin{proof}
By the preceding proposition, for $\nu = \tau^{\boxplus t}$,
\[
1 + M^{\nu \boxplus \widetilde{\rho}} (\mb{z})
= \left(1 + M^{\nu \boxright \widetilde{\rho}}(\mb{z}) \right) \left(1 + M^{\widetilde{\rho}} \left( z_1 (1 + M^{\nu \boxright \widetilde{\rho}}(\mb{z})), \ldots, z_d (1 + M^{\nu \boxright \widetilde{\rho}}(\mb{z})) \right) \right).
\]
Since
\[
\mu_t = \tau^{\boxplus t} \boxright \widetilde{\rho}
= \nu \boxright \widetilde{\rho},
\]
this equation says that
\[
1 + M^{\widetilde{\rho} \boxplus \tau^{\boxplus t}} (\mb{z})
= \left(1 + M^{\mu_t}(\mb{z}) \right) \left(1 + M^{\widetilde{\rho}} \Bigl(z_1 \left(1 + M^{\mu_t}(\mb{z}) \right), \ldots, z_d \left(1 + M^{\mu_t}(\mb{z}) \Bigr) \right) \right).
\]
On the other hand,
\[
\begin{split}
\eta^{\widetilde{\mu}_t}(\mb{z})
& = R^{\widetilde{\mu}_t, \mu_t} \Bigl( z_1 \left(1 + M^{\mu_t}(\mb{z}) \right), \ldots, z_d \left(1 + M^{\mu_t}(\mb{z}) \right) \Bigr) \left(1 + M^{\mu_t}(\mb{z}) \right)^{-1} \\
& = t \widetilde{\beta} \cdot \mb{z} + t \widetilde{\gamma} \sum_{i=1}^d z_i \left(1 + M^{\mu_t}(\mb{z}) \right) \left(1 + M^{\widetilde{\rho}} \Bigl(z_1 \left(1 + M^{\mu_t}(\mb{z}) \right), \ldots, z_d \left(1 + M^{\mu_t}(\mb{z}) \right) \Bigr) \right) z_i.
\end{split}
\]
Combining these two equations, it follows that
\[
\eta^{\widetilde{\mu}_t}(\mb{z}) = t \widetilde{\beta} \cdot \mb{z} + t \widetilde{\gamma} \sum_{i=1}^d z_i \left(1 + M^{\widetilde{\rho} \boxplus \tau^{\boxplus t}} (\mb{z}) \right) z_i
\]
and
\[
\widetilde{\mu}_t = \delta_{t \widetilde{\beta}} \uplus \Phi[\widetilde{\rho} \boxplus \tau^{\boxplus t}]^{\uplus \widetilde{\gamma} t}. \qedhere
\]
\end{proof}

I am grateful to Hari Bercovici for a discussion leading to the following observations.

\begin{Cor}
\label{Cor:Compactly-supported}
Let $\set{(\widetilde{\mu}_t, \mu_t): t \geq 0}$ be a two-state free convolution semigroup of compactly supported probability measures such that $\widetilde{\mu}_1$ has non-zero variance. Then
\[
\mu = \tau \boxright \widetilde{\rho}
\]
and
\[
\mc{J}[\widetilde{\mu}_t] = \widetilde{\rho} \boxplus \tau^{\boxplus t}
\]
for some $\widetilde{\rho}$ a compactly supported probability measure, and $\tau$ a unital, not necessarily positive linear functional with non-negative variance, such that $\abs{\tau[x^n]} \leq C^n$ for some $C$
\end{Cor}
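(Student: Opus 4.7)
The plan is to invert the construction of the preceding Proposition. Since $\widetilde{\mu}_1$ is compactly supported with non-zero variance, equation~\eqref{Eq:Two-state-Maassen} produces a relative canonical triple $(\widetilde{\beta}, \widetilde{\gamma}, \widetilde{\rho})$ with $\widetilde{\gamma} > 0$, where $\widetilde{\rho}$ is a priori only a probability measure. My first step is to promote $\widetilde{\rho}$ to be compactly supported. By Lemma~\ref{Lemma:Monotone}, $\mc{J}[\widetilde{\mu}_1] = \widetilde{\rho} \rhd \mu_1$; the left side is compactly supported since $\mc{J}$ shifts Jacobi parameters and so preserves their boundedness. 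Using $F_{\widetilde{\rho} \rhd \mu_1} = F_{\widetilde{\rho}} \circ F_{\mu_1}$ and inverting $F_{\mu_1}$ in a neighborhood of $\infty$, I conclude that $G_{\widetilde{\rho}}$ admits a convergent power series expansion at $\infty$, which forces $\widetilde{\rho}$ to have exponentially bounded moments and hence compact support.

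Next, I would invoke Proposition~\ref{Prop:Multivariate-composition} in the case $d=1$: the map $\tau \mapsto \tau \boxright \widetilde{\rho}$ is a bijection on $\Dist(1)$, so there is a unique unital linear functional $\tau \in \Dist(1)$ with $\tau \boxright \widetilde{\rho} = \mu$. Feeding this $\tau$ back into the preceding Proposition produces a two-state free convolution semigroup sharing the relative canonical triple $(\widetilde{\beta}, \widetilde{\gamma}, \widetilde{\rho})$ and the second component $\set{\mu_t}$ with the original; by uniqueness of the Maassen-type representation \eqref{Eq:Two-state-Maassen}, the two semigroups coincide, giving $\widetilde{\mu}_t = \delta_{\widetilde{\beta} t} \uplus \Phi[\widetilde{\rho} \boxplus \tau^{\boxplus t}]^{\uplus \widetilde{\gamma} t}$. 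Applying $\mc{J}$ and unwinding the definition of $\Phi$ (via $F_{\mc{J}[\widetilde{\mu}_t]}(z) = z - \widetilde{\beta} t - \widetilde{\gamma} t G_{\widetilde{\rho} \boxplus \tau^{\boxplus t}}(z)$) then yields the desired identity $\mc{J}[\widetilde{\mu}_t] = \widetilde{\rho} \boxplus \tau^{\boxplus t}$.

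The final step is to extract the quantitative bounds on $\tau$. Expanding $\phi_\mu = \phi_\tau \circ F_{\widetilde{\rho}}$ at $\infty$ and comparing the $1/z$ coefficients gives $\Var[\tau] = \Var[\mu] \geq 0$. For the exponential bound $\abs{\tau[x^n]} \leq C^n$, the identity $R^\tau\bigl(z(1 + M^{\widetilde{\rho}}(z))\bigr)(1 + M^{\widetilde{\rho}}(z))^{-1} = R^\mu(z)$ coming from equation~\eqref{Eq:Multivariate-subord} shows, after inverting the substitution $z \mapsto z(1 + M^{\widetilde{\rho}}(z))$ near $0$ (which is licit since $\widetilde{\rho}$ is compactly supported), that $R^\tau$ is analytic in a neighborhood of the origin. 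Consequently the free cumulants of $\tau$ grow at most geometrically, and the moment-cumulant formula transfers this growth to the moments of $\tau$ themselves.

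The main obstacle is the first step, establishing compact support of $\widetilde{\rho}$: every subsequent analytic inversion depends on it, and it is the only place where the hypothesis that $\widetilde{\mu}_t$ and $\mu_t$ be genuine compactly supported probability measures (rather than mere algebraic functionals) enters in an essential way. Once that step is secured, the algebraic machinery of Section~\ref{Section:Algebraic} does the remaining work.
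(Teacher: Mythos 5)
Your proposal is correct and follows essentially the same route as the paper: specialize the algebraic proposition of Section~\ref{Section:Algebraic} to $d=1$, use the bijectivity of $\tau \mapsto \tau \boxright \widetilde{\rho}$ from Proposition~\ref{Prop:Multivariate-composition} to define $\tau$, and then read off positivity, compact support, and the growth bound from the analytic hypotheses. You supply more detail than the paper does (notably the compact support of $\widetilde{\rho}$ via $F_{\widetilde{\rho}} = F_{\mc{J}[\widetilde{\mu}_1]} \circ F_{\mu_1}^{-1}$, and $\Var[\tau] = \Var[\mu]$ in place of the paper's $\Var[\widetilde{\rho}] + t\Var[\tau] \geq 0$ argument), but the skeleton is identical.
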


\begin{proof}
The result follows by applying the preceding proposition in the case $d=1$, when every compactly supported two-state free convolution semigroup is of the form specified in that proposition. Since for each $t$,
\[
\Var[\widetilde{\rho}] + t \Var[\tau] = \Var[\mc{J}[\widetilde{\mu}_t]] \geq 0,
\]
it follows that $\Var[\tau] \geq 0$. Positivity of $\widetilde{\rho}$ follows from positivity of $(\widetilde{\mu}_t, \mu_t)$, and the compact support of $\widetilde{\rho}$ and the growth conditions on $\tau$ from the compact support of  $(\widetilde{\mu}_t, \mu_t)$.
\end{proof}

\begin{Lemma}
\label{Lemma:Hari}
Let $\tau$ be a unital linear functional with positive variance such that $\abs{\tau[x^n]} \leq C^n$ for some $C$. Then $\tau^{\boxplus t}$ is positive definite, and so can be identified with a compactly supported measure, for sufficiently large $t$.
\end{Lemma}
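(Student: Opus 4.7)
The strategy is to apply the free central limit theorem by rescaling, which brings the Voiculescu transform of $\tau^{\boxplus t}$ close to that of the standard semicircle, and then to invoke the perturbation form of Theorem~2 of \cite{BerVoiSuperconvergence} in the same way as in Example~\ref{Example:Counterexample} to conclude positivity. First I would reduce to the case $\tau[x] = 0$ and $\Var[\tau] = 1$: writing $\beta = \tau[x]$ and $\gamma = \Var[\tau] > 0$, the functional $\tau'$ obtained from $\tau \boxplus \delta_{-\beta}$ by the dilation $x \mapsto x/\sqrt{\gamma}$ has mean $0$ and variance $1$ and inherits moment bounds $|\tau'[x^n]|\leq (C')^n$; and $\tau^{\boxplus t}$ is a compactly supported probability measure iff $(\tau')^{\boxplus t}$ is, since affine changes preserve positivity and compactness of support. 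Under this normalization, the moment--cumulant relations give $|\kappa_j(\tau)|\leq K^j$ for some $K$, and the Voiculescu transform $\phi_\tau(z) = 1/z + \sum_{k \geq 2} \kappa_{k+1}(\tau)\, z^{-k}$ makes sense as an analytic function in a neighborhood of infinity.

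Next, let $Y_t$ denote the law of $X/\sqrt{t}$ for $X \sim \tau^{\boxplus t}$. The scaling rules $\phi_{aX}(w) = a\,\phi_X(w/a)$ and $\phi_{\tau^{\boxplus t}} = t \phi_\tau$ yield
\[
\phi_{Y_t}(w) \;=\; \sqrt{t}\,\phi_\tau\bigl(\sqrt{t}\, w\bigr) \;=\; \frac{1}{w} + \sum_{k \geq 2} \kappa_{k+1}(\tau)\, t^{-(k-1)/2}\, w^{-k},
\]
so on any fixed annulus $\set{|w| \geq R}$ with $R > K$ the function $\phi_{Y_t}$ differs from $\phi_\sigma(w) = 1/w$ by an analytic perturbation of uniform magnitude $O(t^{-1/2})$; equivalently the analytic $R$-transform $R_{Y_t}(z) = \phi_{Y_t}(1/z)$ satisfies $R_{Y_t}(z) = z + O(t^{-1/2})$ on a fixed disk about the origin.

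I would finish by invoking Theorem~2 of \cite{BerVoiSuperconvergence} in the same spirit as in Example~\ref{Example:Counterexample}: a function of the form $z + g(z)$ with $g$ analytic and sufficiently small on a fixed disk about the origin is the $R$-transform of some compactly supported probability measure. For $t$ large enough the estimate above places $R_{Y_t}$ in this regime, so $Y_t$ is a compactly supported probability measure, and consequently so is $\tau^{\boxplus t}$. The only real obstacle is this appeal to the Bercovici--Voiculescu perturbation criterion, used as a black box; once it is available, the rest is bookkeeping with the scaling rules of the $R$-transform implementing the free central limit theorem.
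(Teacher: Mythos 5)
Your proposal is correct and follows essentially the same route as the paper: normalize to mean $0$ and variance $1$, observe that the exponential moment bounds give exponential bounds on the free cumulants so that the $R$-transform is analytic near $0$ with leading term $z$, rescale by $1/\sqrt{t}$ so that the $R$-transform of the dilated $\tau^{\boxplus t}$ becomes a small perturbation of $z$, and invoke Theorem~2 of \cite{BerVoiSuperconvergence}. The only difference is that you spell out the $O(t^{-1/2})$ estimate explicitly where the paper leaves it implicit.
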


\begin{proof}
Without loss of generality, we may assume that $\tau$ has mean $0$ and variance $1$. By assumption, the moments of $\tau$, and so also its free cumulants, grow no faster than exponentially. Therefore the $R$-transform of $\tau$ can be identified with an analytic function whose power series expansion at zero starts with $z$. It follows that for sufficiently large $t$, the $R$-transform of $D_{1/\sqrt{t}} \tau^{\boxplus t}$ satisfies the conditions of Theorem~2 of \cite{BerVoiSuperconvergence}. Applying that theorem, we conclude that $D_{1/\sqrt{t}} \tau^{\boxplus t}$, and so also $\tau^{\boxplus t}$, can be identified with a positive measure.
\end{proof}


\begin{thebibliography}{VDN92}

\bibitem[Ans09]{AnsAppell3}
Michael Anshelevich, \emph{Appell polynomials and their relatives. {III}.
  {C}onditionally free theory}, Illinois J. Math. \textbf{53} (2009), no.~1,
  39--66. \MR{MR2584934}

\bibitem[Ans10]{AnsEvolution}
\bysame, \emph{Free evolution on algebras with two states}, J. Reine Angew.
  Math. \textbf{638} (2010), 75--101. \MR{2595336}

\bibitem[Ans11]{Ans-Two-Brownian}
\bysame, \emph{Two-state free {B}rownian motions}, J. Funct. Anal. \textbf{260}
  (2011), no.~2, 541--565. \MR{2737412}

\bibitem[Ans13]{Ans-Generator}
\bysame, \emph{Generators of some non-commutative stochastic processes},
  Probab. Theory Related Fields \textbf{157} (2013), no.~3--4, 777--815.


\bibitem[AM12]{Ans-Mlot-Semigroups}
Michael Anshelevich and Wojciech M{\l}otkowski, \emph{Semigroups of
  distributions with linear {J}acobi parameters}, J. Theoret. Probab.
  \textbf{25} (2012), no.~4, 1173--1206. \MR{2993018}

\bibitem[BB04]{Belinschi-Bercovici-Partially-defined}
Serban~T. Belinschi and Hari Bercovici, \emph{Atoms and regularity for measures in a
  partially defined free convolution semigroup}, Math. Z. \textbf{248} (2004),
  no.~4, 665--674. \MR{2103535 (2006i:46095)}

\bibitem[BN08]{Belinschi-Nica-B_t}
Serban~T. Belinschi and Alexandru Nica, \emph{On a remarkable semigroup of
  homomorphisms with respect to free multiplicative convolution}, Indiana Univ.
  Math. J. \textbf{57} (2008), no.~4, 1679--1713. \MR{MR2440877 (2009f:46087)}

\bibitem[BN09]{Belinschi-Nica-Free-BM}
\bysame, \emph{Free {B}rownian motion and evolution towards
  {$\boxplus$}-infinite divisibility for {$k$}-tuples}, Internat. J. Math.
  \textbf{20} (2009), no.~3, 309--338. \MR{MR2500073}

\bibitem[BP99]{BerPatDomains}
Hari Bercovici and Vittorino Pata, \emph{Stable laws and domains of attraction
  in free probability theory}, Ann. of Math. (2) \textbf{149} (1999), no.~3,
  1023--1060, With an appendix by Philippe Biane. \MR{2000i:46061}

\bibitem[BV93]{BV93}
Hari Bercovici and Dan Voiculescu, \emph{Free convolution of measures with
  unbounded support}, Indiana Univ. Math. J. \textbf{42} (1993), no.~3,
  733--773. \MR{MR1254116 (95c:46109)}

\bibitem[BV95]{BerVoiSuperconvergence}
Hari Bercovici and Dan Voiculescu, \emph{Superconvergence to the central limit and
  failure of the {C}ram\'er theorem for free random variables}, Probab. Theory
  Related Fields \textbf{103} (1995), no.~2, 215--222. \MR{MR1355057
  (96k:46115)}

\bibitem[Kry07]{Krystek-Conditional}
Anna~Dorota Krystek, \emph{Infinite divisibility for the conditionally free
  convolution}, Infin. Dimens. Anal. Quantum Probab. Relat. Top. \textbf{10}
  (2007), no.~4, 499--522. \MR{MR2376439 (2009d:46118)}

\bibitem[Len07]{Lenczewski-Decompositions-convolution}
Romuald Lenczewski, \emph{Decompositions of the free additive convolution}, J.
  Funct. Anal. \textbf{246} (2007), no.~2, 330--365. \MR{MR2321046
  (2008d:28009)}

\bibitem[Maa92]{Maa92}
Hans Maassen, \emph{Addition of freely independent random variables}, J. Funct.
  Anal. \textbf{106} (1992), no.~2, 409--438. \MR{MR1165862 (94g:46069)}

\bibitem[M{\l}o10]{Mlotkowski-Fuss-Catalan}
Wojciech M{\l}otkowski, \emph{Fuss-{C}atalan numbers in noncommutative
  probability}, Doc. Math. \textbf{15} (2010), 939--955. \MR{2745687
  (2012c:46170)}

\bibitem[Nic09]{Nica-Subordination}
Alexandru Nica, \emph{Multi-variable subordination distributions for free
  additive convolution}, J. Funct. Anal. \textbf{257} (2009), no.~2, 428--463.
  \MR{2527024 (2010j:46121)}

\bibitem[NS96]{Nica-Speicher-Multiplication}
Alexandru Nica and Roland Speicher, \emph{On the multiplication of free
  {$N$}-tuples of noncommutative random variables}, Amer. J. Math. \textbf{118}
  (1996), no.~4, 799--837. \MR{98i:46069}

\bibitem[NS06]{Nica-Speicher-book}
\bysame, \emph{Lectures on the combinatorics of free probability}, London
  Mathematical Society Lecture Note Series, vol. 335, Cambridge University
  Press, Cambridge, 2006. \MR{MR2266879 (2008k:46198)}

\bibitem[VDN92]{VDN}
D.~V. Voiculescu, K.~J. Dykema, and A.~Nica, \emph{Free random variables}, CRM
  Monograph Series, vol.~1, American Mathematical Society, Providence, RI,
  1992, A noncommutative probability approach to free products with
  applications to random matrices, operator algebras and harmonic analysis on
  free groups. \MR{MR1217253 (94c:46133)}

\bibitem[Wan11]{Wang-Additive-c-free}
Jiun-Chau Wang, \emph{Limit theorems for additive conditionally free
  convolution}, Canad. J. Math. \textbf{63} (2011), no.~1, 222--240.
  \MR{2779139}

\end{thebibliography}

\def\cprime{$'$}
\providecommand{\bysame}{\leavevmode\hbox to3em{\hrulefill}\thinspace}
\providecommand{\MR}{\relax\ifhmode\unskip\space\fi MR }
\providecommand{\MRhref}[2]{%
  \href{http://www.ams.org/mathscinet-getitem?mr=#1}{#2}
}
\providecommand{\href}[2]{#2}

\end{document}